\newtheorem{thm}{Theorem}[section]
\newtheorem{lem}[thm]{Lemma}
\newtheorem{prop}[thm]{Proposition}
\theoremstyle{definition}
\numberwithin{equation}{section}
\newcommand{\be}{\begin{equation}}
\newcommand{\ee}{\end{equation}}
\begin{document}
\title{Minimizers of nonlocal interaction functional with
exogenous potential}
\author[Wang]{Wanwan Wang}%
\address{School of Mathematics, Southeast University, Nanjing 211189, P. R. China}
\email{wwwang2014@yeah.net}
\author[Li]{Yuxiang Li}%
\address{School of Mathematics, Southeast University, Nanjing 211189, P. R. China}
\email{lieyx@seu.edu.cn}
\subjclass[2010]{45J45, 92D25, 35A15, 35B36.}%
\keywords{Interaction of attractive and repulsive potentials, Aggregation models; Exogenous
potential, Minimizers}

\begin{abstract}
The purpose of this paper is to consider the minimization problem of the following nonlocal interaction functional
\begin{equation*}
E[\rho]=\frac{1}{2}\int_{\mathbb{R}^N} \int_{\mathbb{R}^N}K(x-y)\rho(x)\rho(y)dxdy+\int_{\mathbb{R}^N}F(x)\rho(x)dx.
\end{equation*}
The kernel $K(x)=\frac{1}{q}|x|^q-\frac{1}{p}|x|^p$ is an endogenous potential,
where $q>p>-N$. The exogenous potential $F$ is a nonnegative continuous function and satisfies
$F(x)\to +\infty$ as $|x|\to +\infty$.
The existence of minimizers  are established based on the
concentration compactness principle.
Especially, for $F(x)=\beta|x|^2(\beta >0)$
and $K(x)=\frac{1}{2}|x|^2-\frac{1}{2-N}|x|^{2-N}$($N>2$),
the global minimizer
is given explicitly by the method of calculus of variation.
\end{abstract}

\medskip

\maketitle

\section{Introduction}
In this paper, we consider the minimization problem of the following nonlocal interaction functional
\begin{equation}\label{eq 1}
E[\rho]=\frac{1}{2}\int_{\mathbb{R}^N} \int_{\mathbb{R}^N}K(x-y)\rho(x)\rho(y)dxdy+\int_{\mathbb{R}^N}F(x)\rho(x)dx,
\end{equation}
where integer $N\geq 1$ and  the population density $\rho\geq 0$.
The exogenous potential $F$ is a nonnegative continuous function and satisfies $F(x)\to +\infty$
as $|x|\to +\infty$.
The kernel
\begin {equation}\label{eq d1}
K(x)=\frac{1}{q}|x|^q-\frac{1}{p}|x|^p
\end {equation}
is power-law repulsive-attractive potential with $q>p>-N$.

\vskip 2mm
It is well known that the interaction functional $E[\rho]$
is related to a class of biological aggregation models \cite{CT,CFT,BCLR,CHM,SST}.
The aggregation models consist of the following continuity equation in $\mathbb{R}^N$
\begin{equation}\label{eq a2}
\rho_t+\nabla\cdot(\rho V)=0,\ \ \ \ V=-\nabla K*\rho-\nabla F,
\end{equation}
where the velocity field is denoted by $V$.
The functions $K:\mathbb{R}^N\rightarrow \mathbb{R}$ and
$F:\mathbb{R}^N\rightarrow \mathbb{R}$ represent the endogenous potential and exogenous potential respectively.
The convolution kernel $-\nabla K$ incorporates the
endogenous forces arising from the inter-individual (attraction and repulsion) interactions,
see \cite{BT} for instance.
In fact, the equation (\ref{eq a2}) can be considered as a gradient flow of the functional (\ref{eq 1})
with respect to the
Euclidean Wasserstein metric \cite{AGS,CDFLS,CMV,V}.
This equation appears in various contexts, including animals
in flock patterns in biological swarms \cite{ME,TBL}, robotic swarming \cite{CHDB,PEG},
granular media \cite{BCP,BCCP,CMV,T},
self-assembly of nanoparticles \cite{HP,HPP},
Ginzburg-Landau vortices \cite{DZ,MZ,W}, etc.

\vskip 2mm
The minimization problem of the following nonlocal functional without exogenous potential
\begin{equation}\label{eq 2}
E[\rho]=\frac{1}{2}\int_{\mathbb{R}^N} \int_{\mathbb{R}^N}K(x-y)\rho(x)\rho(y)dxdy
\end{equation}
with $K$ in (\ref {eq d1}) has been extensively studied in the past few years.
Balagu\'{e} et al. \cite{BCLR} proved that the more repulsive the potential $K$ is at the origin, the higher dimension
of the support of local minimizers will be.
In \cite{BCLRn} the same authors gave conditions for radial stability/instability of particular local minimizers.
Carrillo et al. \cite{CCH} showed the existence of global minimizers in the discrete setting for $q>p$ ,
and obtained the uniform bound on the diameter of global minimizers
for $q>p\geq 1$.
Especially, for the one-dimensional case, the discrete minimizer is unique
and symmetric with respect to its centre of mass for $p\leq 1$, $q\geq 1$ and $q>p$.
Choksi et al. \cite{CFT} showed the existence of minimizers in the class of probability measures
when the power of repulsion $p$  is positive.
For the repulsion having a singularity at the origin, i.e. $p<0$,
they established the existence of minimizers in a  class of  bounded
$L^1$-functions satisfying a given mass constraint.
In the special case of Newtonian repulsion
and quadratic attraction, they showed that the unique global minimizer
is the characteristic function on a ball. We refer to \cite{FH,FHK} for further reading.
\vskip 2mm

The minimization problem of the nonlocal functional (\ref {eq 2}) with general potential $K$
has also been studied in some literatures.
 When $K$ is supposed to be lower-semicontinuous and locally integrable on $\mathbb{R}^N$,
Simione et al.  \cite{SST} obtained that the functional admits a global minimizer
if $K(x)\to +\infty$ as $|x|\to +\infty$,
and it also admits  a global minimizer if $K(x)\to 0$ as $|x|\to +\infty$ and
some additional assumptions of the functional.
For $K$ lower-semicontinuous and locally integrable on $\mathbb{R}^N$,
Carrillo et al. \cite{CDM} also  proved that if this repulsion is like Newtonian or more singular than Newtonian,
 the local minimizers must be locally bounded.
Moreover, under some suitable assumption on the potential $K$,
Ca\~{n}izo et al. \cite{CCP} proved that there exist global minimizers which are compact.
Carrillo et al. \cite{CFP} showed that the support of any local minimizer consists of isolated points
whenever the interaction potential is of class $C^2$ and mildly repulsive at the origin.
We  refer to  \cite{CDFLS,CMV,BL,BV,LTB} for further reading.

\vskip 2mm
For positive exogenous potential, the functional (\ref{eq 1}) is studied by some authors.
In one spatial dimension, and for several choices of endogenous potentials,
external forces, Bernoff and Topaz \cite{BT} found exact analytical expressions
of the minimizer of the  population density.
For  $K(x)=e^{-|x|}$ and $F(x)=\beta x^2$ ($\beta>0$), for given mass $m$, they found
the minimizer of the following form
\begin{eqnarray*}
\rho_0(x)=\left\{\begin{array}{ll}{\frac{\beta}{2}\left[\left(\frac{3 m}
{2 \beta}+1\right)^{\frac{2}{3}}+\left(1-x^{2}\right)\right],} & {|x|
\leq\left(\frac{3 m}{2 \beta}+1\right)^{\frac{1}{3}}-1}, \\ {0,}
& {|x|>\left(\frac{3 m}{2 \beta}+1\right)^{\frac{1}{3}}-1}\end{array}\right.
\end{eqnarray*}
satisfying  $\int_\mathbb{R}\rho_0(x)dx=m$.
For $F(x) = gx$ ($x>0$) with $g<m$, they found the minimizer of the following form
\begin{eqnarray*}
\rho_0(x)=\left\{\begin{array}{ll}{\sqrt{g m}-\frac{g}{2}(1+x)+\sqrt{g m} \delta(x),}
& {0 \leq x \leq 2 \sqrt{m / g}-2}, \\ {0,} & {x>2 \sqrt{m / g}-2}.\end{array}\right.
\end{eqnarray*}
The exact solutions provide a sampling of the wide variety of equilibrium configurations
possible within the general swarm modeling framework.
Some other authors also obtained the asymptotic behaviour for (\ref {eq a2}) with exogenous potential $F$,
see \cite{CMVk,FR,FRs,Lions,R}.

\vskip 2mm
We point out that the computation in \cite{BT} is formal.
The aim of this paper is to give a rigorous proof and extend the result to high dimension.
To present our main result, we distinguish two case: $p<0$ and $p>0$.
In the case of $-N<p<0$ and $q>p$, we consider the following variational problem:
\begin{equation}\label{eq 4}
  {\rm minimizing} \quad  E[\rho]=\frac{1}{2}\int_{\mathbb{R}^N} \int_{\mathbb{R}^N}K(x-y)\rho(x)\rho(y)dxdy
  +\int_{\mathbb{R}^N}F(x)\rho(x)dx
\end{equation}
over
\begin{equation}\label{eq 5}
 \mathcal{D}_{M,m}:= \left\{\rho\in L^1(\mathbb{R}^N) \cap L^\infty(\mathbb{R}^N):\rho\geq 0,\ \|\rho\|_{L^\infty}\leq M,\|\rho\|_{L^1}=m,\ \rho\in L^1(\mathbb{R}^N,Wdx)\right\},
\end{equation}
where $M>0$, $m>0$ and
\begin{equation*}
W(x) =\left\{ \arraycolsep=1pt
\begin{array}{lll}
 \max\  \{F(x),(1+|x|)^q\}, \quad
 \quad {\rm if}\   q>0,\\[2mm]
 \phantom{    }
 F(x),\quad
 \quad  \phantom{zzzz  zzzzz zzzzzz } {\rm if}\ q<0.
\end{array}
\right.\qquad
\end{equation*}

In the case of $0<p<q$, we consider another variational problem:
\begin{equation}\label{eq 6}
  {\rm minimizing} \quad  E[\mu]=\frac{1}{2}\int_{\mathbb{R}^N} \int_{\mathbb{R}^N}K(x-y)d\mu(x)d\mu(y)
  +\int_{\mathbb{R}^N}F(x)d\mu(x),
\end{equation}
over probability measures $\mu\in \mathcal{P}(\mathbb{R}^N):=\{\mu\geq0,\ \int_{\mathbb{R}^N}d\mu=1\}$,
endowed with the weak-$\ast$ topology.

Our results read as follows.
\begin{thm}\label{th1}
Assume that  $-N<p<0$, $q>p$, $K$ is a power-law repulsive-attractive potential verifying (\ref{eq d1})
and $\mathcal{D}_{M,m}$ is denoted  in (\ref{eq 5}) with $M,\, m>0$.
Then for any $M,\, m>0$, the nonlocal interaction functional $E[\cdot]$ (\ref{eq 4}) has
at least a minimizer in $\mathcal{D}_{M,m}$.
\end{thm}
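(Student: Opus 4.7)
The strategy is the direct method of the calculus of variations. Tightness is supplied by the confining potential $F$, so the concentration-compactness dichotomy collapses to a direct compactness argument; the only substantial issue is lower semicontinuity against the singular kernel.

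I would first verify that $E$ is bounded below on $\mathcal{D}_{M,m}$. For any $\rho\in\mathcal{D}_{M,m}$, the singular repulsive piece $-\tfrac{1}{2p}\iint|x-y|^p\rho(x)\rho(y)\,dxdy$ is controlled by splitting into $|x-y|<1$ and $|x-y|\ge 1$: on the first region use $p+N>0$ together with the pointwise bound $\rho\le M$; on the second use $|x-y|^p\le 1$ and $\|\rho\|_{L^1}=m$. The attractive contribution is either non-negative (when $q>0$) or bounded by the same kind of splitting (when $q<0$), and is in any case finite because of the weight $W$ built into $\mathcal{D}_{M,m}$. Thus $I:=\inf_{\mathcal{D}_{M,m}}E$ is finite.

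Next I take a minimizing sequence $\{\rho_n\}$. The energy bound $E[\rho_n]\le I+1$ together with the interaction estimates above gives $\int F\rho_n\le C$ uniformly in $n$. Since $F\to+\infty$, Chebyshev's inequality yields tightness: $\int_{|x|>R}\rho_n\,dx\to 0$ uniformly in $n$ as $R\to\infty$. Combined with $\|\rho_n\|_{L^\infty}\le M$ and $\|\rho_n\|_{L^1}=m$, I extract a subsequence such that $\rho_n\rightharpoonup\rho$ weakly in $L^1$ and weakly-$\ast$ in $L^\infty$; the limit then satisfies $0\le\rho\le M$ a.e., $\|\rho\|_{L^1}=m$ (mass preserved by tightness), and $\int W\rho\le\liminf\int W\rho_n<\infty$ by Fatou, so $\rho\in\mathcal{D}_{M,m}$. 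The exogenous term is lower semicontinuous: approximate $F$ from below by continuous compactly supported $F_j\uparrow F$ and apply monotone convergence. For the interaction, split $K=K\mathbf{1}_{|\cdot|<\delta}+K\mathbf{1}_{|\cdot|\ge\delta}$; the near-diagonal piece is uniformly bounded by $C(M,m)(\delta^{p+N}+\delta^{q+N})=o_\delta(1)$ (both exponents being positive since $q>p>-N$), while the bounded-continuous remainder converges under the weak convergence of the product measures $\rho_n\otimes\rho_n\rightharpoonup\rho\otimes\rho$, itself a consequence of tightness in each factor together with Stone--Weierstrass on tensor products. Letting $\delta\to 0$ yields $E[\rho]\le\liminf_n E[\rho_n]=I$, so $\rho$ is a minimizer.

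\emph{Main obstacle.} The delicate step is passing to the limit through the singular repulsive kernel: a priori, mass concentration along $\rho_n$ could blow up $\iint|x-y|^p\rho\otimes\rho$ when $p<0$. The $L^\infty$-constraint built into $\mathcal{D}_{M,m}$, combined with $p+N>0$, is exactly what keeps the near-diagonal contribution uniformly $o_\delta(1)$; this is the conceptual reason for the pointwise bound $M$ in the admissible class, and once that bound has been exploited, the remainder of the proof is routine and the direct tightness argument replaces any genuine invocation of Lions' concentration-compactness.
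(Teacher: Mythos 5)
Your architecture is sound and in places genuinely cleaner than the paper's. Where the paper adapts Lions' concentration-compactness lemma (its Lemmas 2.3 and 3.3, with separate arguments for $q>0$ and $q<0$), you observe that for $p<0<q$ every term of $E$ is nonnegative, and for $q<0$ the attractive term is bounded below by a constant depending only on $M,m,q,N$ (your $\delta$-splitting; the paper uses Hardy--Littlewood--Sobolev for the same purpose), so a minimizing sequence satisfies $\int F\rho_n\,dx\le C$ and Chebyshev with $F\to+\infty$ gives uniform tightness directly; together with Dunford--Pettis this replaces the paper's entire compactness apparatus. Your treatment of the $F$-term and of the repulsive part (near-diagonal $O(Mm\,\delta^{p+N})$, far part via narrow convergence of $\rho_n\otimes\rho_n$) matches in substance the paper's use of the Choksi--Fetecau--Topaloglu continuity lemma, up to the cosmetic point that $\mathbf{1}_{|z|\ge\delta}K(z)$ is discontinuous on the sphere $|z|=\delta$ and should be replaced by a continuous cutoff.

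The genuine gap is the attractive part when $q>0$. There the remainder $K\mathbf{1}_{|\cdot|\ge\delta}$ is \emph{unbounded} (it grows like $|z|^q$), so it is not an admissible test function for narrow convergence of $\rho_n\otimes\rho_n$, and the claimed convergence of this piece is false in general: tightness does not control the tail of the $q$-th moment. Concretely, place mass $m-\varepsilon_n$ on the unit ball and mass $\varepsilon_n$ (spread at height $\le M$) near distance $R_n$ with $\varepsilon_n R_n^q=1$: then $\int_{|x|>R}\rho_n\,dx\to 0$ uniformly in $n$, yet $\iint_{|x-y|\ge R}|x-y|^q\rho_n\rho_n\,dxdy\gtrsim m$ for every fixed $R$, so the attractive integral can drop in the limit and is only lower semicontinuous, never continuous, along such sequences. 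Lower semicontinuity is all you need, and it is recoverable inside your scheme: since $K>0$ everywhere when $p<0<q$ (both summands are positive), truncate additionally at radius $R$, so that $\iint K\mathbf{1}_{\delta\le|x-y|\le R}\,\rho_n\rho_n\le\iint K\rho_n\rho_n+o_\delta(1)$; pass to the limit in the bounded truncation, then let $R\to\infty$ and $\delta\to0$ by monotone convergence. This is exactly the content of the paper's Lemma 3.1, whose ball-truncation exists precisely because of this unbounded tail. Relatedly, your one-line ``$\int W\rho\le\liminf\int W\rho_n<\infty$ by Fatou'' presumes a uniform bound on $\int(1+|x|)^q\rho_n\,dx$ when $q>0$, which tightness alone does not supply; it does follow from the energy bound $\iint|x-y|^q\rho_n\rho_n\le 2q(I+1)$ combined with a fixed ball carrying mass $\ge m/2$, but this step must be spelled out for the limit to belong to $\mathcal{D}_{M,m}$.
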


\begin{thm}\label{th3}
Assume that $0<p<q$, $K$ is a power-law repulsive-attractive potential verifying (\ref{eq d1}).
Then  nonlocal interaction functional $E[\cdot]$ (\ref{eq 6}) has
at least a minimizer in probability measures $\mathcal{P}(\mathbb{R}^N)$.
\end{thm}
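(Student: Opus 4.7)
My plan is to apply the direct method of the calculus of variations, relying on the fact that the assumption $0<p<q$ removes the singularity of $K$ at the origin, rendering $K$ continuous on $\mathbb{R}^N$, and that the same assumption forces $K$ to be bounded below. Indeed, the radial function $r\mapsto\tfrac{1}{q}r^q-\tfrac{1}{p}r^p$ on $[0,\infty)$ attains its global minimum at $r=1$, so $K(x)\ge\tfrac{1}{q}-\tfrac{1}{p}=:-c_0$ for all $x\in\mathbb{R}^N$; moreover $K(x)\to+\infty$ as $|x|\to\infty$. I will use these two properties repeatedly.

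First I would verify that $\inf_{\mathcal{P}}E$ is finite by evaluating $E$ on a fixed compactly supported probability density, for which both terms are finite because $K$ and $F$ are locally bounded. Given a minimizing sequence $\{\mu_n\}\subset\mathcal{P}(\mathbb{R}^N)$, the inequality $K\ge -c_0$ together with $\mu_n$ being a probability measure gives $\int F\,d\mu_n\le E[\mu_n]+\tfrac{c_0}{2}\le C$ uniformly in $n$. Because $F(x)\to+\infty$ at infinity, a Chebyshev-type estimate yields tightness of $\{\mu_n\}$: for any $\varepsilon>0$ one chooses $R$ so large that $F\ge C/\varepsilon$ on $\{|x|>R\}$, whence $\mu_n(\{|x|>R\})\le\varepsilon$. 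By Prokhorov's theorem a subsequence (still labeled $\mu_n$) converges in the weak-$*$ topology to some $\mu\in\mathcal{P}(\mathbb{R}^N)$.

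Next I would establish lower semicontinuity of both terms along this sequence. For the exogenous term, since $F$ is nonnegative and continuous, the Portmanteau lemma yields $\liminf_n\int F\,d\mu_n\ge\int F\,d\mu$. For the interaction term, the product measures $\mu_n\otimes\mu_n$ converge weakly-$*$ to $\mu\otimes\mu$ on $\mathbb{R}^N\times\mathbb{R}^N$, and the shifted kernel $\widetilde{K}(x,y):=K(x-y)+c_0$ is nonnegative and continuous. Truncating $\widetilde{K}_M:=\min(\widetilde{K},M)$ yields bounded continuous functions, so $\iint \widetilde{K}_M\,d(\mu_n\otimes\mu_n)\to\iint \widetilde{K}_M\,d(\mu\otimes\mu)$; sending $M\to\infty$ via monotone convergence on both sides produces the desired liminf inequality for $\widetilde{K}$ and hence for $K$. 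Combining the two lower bounds gives $E[\mu]\le\liminf_n E[\mu_n]=\inf E$, so $\mu$ is a minimizer.

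I expect the main obstacle to be this lower semicontinuity of the double integral against an unbounded kernel: weak-$*$ convergence of measures does not automatically pass to integrals of unbounded test functions, and the truncation/monotone-convergence argument above is precisely what is required to circumvent this issue. The strategy hinges crucially on $K$ being continuous and bounded below, both of which fail in the singular regime $p<0$ treated in Theorem~\ref{th1}; there, the additional constraint $\|\rho\|_{L^\infty}\le M$ was imposed precisely to control the singularity of $K$ at the origin, which is why the two theorems require different function-space frameworks.
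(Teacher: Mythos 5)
Your proposal is correct, and its overall architecture (tightness of a minimizing sequence, Prokhorov, weak lower semicontinuity of both terms) matches the paper's, but you reach the compactness step by a genuinely more elementary route. The paper proves tightness through a Lions-type concentration-compactness lemma (its Lemma \ref{lemma 4}): it introduces the concentration function $Q_n(R)=\sup_{y}\int_{B(y,R)}d\mu_n$, extracts a limit $\alpha$, rules out $\alpha<1$ by contradiction using the bound $K\ge \frac{1}{q}-\frac{1}{p}$ together with $F\to+\infty$, and then separately shows the centering points $y_{n_k}$ remain bounded, again via the growth of $F$. Your observation short-circuits all of this: since $\frac{1}{2}\iint K\,d\mu_n\,d\mu_n\ge \frac{1}{2}\left(\frac{1}{q}-\frac{1}{p}\right)$ for any probability measure, the energy bound directly gives $\sup_n\int F\,d\mu_n<\infty$, and Chebyshev plus $F\to+\infty$ yields uniform tightness with no dichotomy analysis and no translation bookkeeping --- the confining exogenous potential is exactly what makes the concentration-compactness machinery unnecessary here (it is indispensable only in translation-invariant problems, which is where the paper's argument originates). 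For the interaction term, the paper simply cites \cite[Lemma 2.2]{SST} for lower-semicontinuous kernels bounded below, whereas you reprove it via the truncation $\widetilde{K}_M=\min(\widetilde{K},M)$; this is essentially the proof of the cited lemma, so there is no difference in substance. Two small points of hygiene: on the approximating side you only need the one-sided bound $\iint\widetilde{K}_M\,d(\mu_n\otimes\mu_n)\le\iint\widetilde{K}\,d(\mu_n\otimes\mu_n)$ (monotone convergence is required only on the limit measure), and the convergence $\mu_n\otimes\mu_n\rightharpoonup\mu\otimes\mu$ you invoke is legitimate precisely because tightness upgrades weak-$*$ convergence to narrow convergence with a probability-measure limit, preventing mass loss. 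The paper handles $\int F\,d\mu$ by a ball-truncation argument equivalent to your Portmanteau step. In short: same skeleton, but your tightness argument is simpler than the paper's Lemma \ref{lemma 4} at no loss of rigor.
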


\begin{thm}\label{th4}
Let $q=2$, $p=2-N$ ($N>2$) and denote by $\omega_N$ the volume of unit ball in $\mathbb{R}^N$
and by $\chi$ the characteristic function of a domain. Assume that $F(x)=\beta|x|^2$ with $\beta>0$.
Then for any $m>0$ and $M\geq\frac{m+2\beta}{\omega_N}$, the function $\rho_0(x)=\frac{m+2\beta}{\omega_N}
\chi_{B(0,r_0)}(x)$ is the global minimizer of the nonlocal interaction functional $E[\cdot]$ (\ref{eq 4})
in $\mathcal{D}_{M,m}$, where $r_0=(\frac{m}{m+2\beta})^{1/N}$.
\end{thm}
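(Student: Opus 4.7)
The plan is to verify that $\rho_0$ satisfies an Euler--Lagrange optimality condition and then promote this to global minimality via a convexity-type argument; note first that $\rho_0\in\mathcal{D}_{M,m}$ because $\|\rho_0\|_{L^1}=(m+2\beta)r_0^N=m$, $\|\rho_0\|_{L^\infty}=(m+2\beta)/\omega_N\leq M$, and $\mathrm{supp}(\rho_0)$ is compact.

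Split $K=K_1+K_2$ with $K_1(x)=\tfrac{1}{2}|x|^2$, $K_2(x)=\tfrac{1}{N-2}|x|^{2-N}$, and use that $-\Delta K_2=N\omega_N\delta_0$. Solving the radial Poisson equation $-\Delta(K_2*\rho_0)=N(m+2\beta)\chi_{B(0,r_0)}$ on $B(0,r_0)$ gives $(K_2*\rho_0)(x)=-\tfrac{m+2\beta}{2}|x|^2+C$, while Newton's theorem gives $(K_2*\rho_0)(x)=\tfrac{m}{N-2}|x|^{2-N}$ outside, with $C$ determined by continuity at $|x|=r_0$. A direct expansion of $|x-y|^2$, together with $\int_{B(0,r_0)}y\,dy=0$ and the key identity $(m+2\beta)r_0^N=m$, gives $(K_1*\rho_0)(x)=\tfrac{m}{2}|x|^2+\text{const}$ for all $x$. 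Adding $F(x)=\beta|x|^2$, the $|x|^2$ coefficients cancel on $B(0,r_0)$, producing a constant
\[
\lambda:=(K*\rho_0+F)(x),\qquad x\in B(0,r_0).
\]
For $r=|x|>r_0$ the radial derivative is $(m+2\beta)r-m r^{1-N}=r^{1-N}\bigl[(m+2\beta)r^N-m\bigr]$, which vanishes at $r_0$ and is strictly positive for $r>r_0$; hence $K*\rho_0+F\geq\lambda$ everywhere, with equality precisely on $\mathrm{supp}(\rho_0)$.

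For any competitor $\rho\in\mathcal{D}_{M,m}$, set $h=\rho-\rho_0$. Since $\int h=0$, a standard expansion gives
\[
E[\rho]-E[\rho_0]=\tfrac{1}{2}\iint K(x-y)h(x)h(y)\,dx\,dy+\int h(x)\bigl[(K*\rho_0)(x)+F(x)-\lambda\bigr]\,dx.
\]
The linear integral is nonnegative: on $B(0,r_0)$ the bracket vanishes, while off this ball $\rho_0\equiv 0$ forces $h=\rho\geq 0$ against a nonnegative bracket. For the quadratic form, expanding $|x-y|^2$ and using $\int h=0$ yields $\tfrac{1}{2}\iint|x-y|^2 h(x)h(y)\,dx\,dy=-|\textstyle\int xh|^2=-m^2|c_\rho|^2$, where $c_\rho$ denotes the center of mass of $\rho$ (that of $\rho_0$ being zero by symmetry); the Newtonian piece $\tfrac{1}{N-2}\iint|x-y|^{2-N}h(x)h(y)\,dx\,dy=N\omega_N\int|\nabla u|^2\,dx$, where $-\Delta u=h$, is nonnegative, justified either by Fourier analysis (the Fourier transform of $|x|^{2-N}$ is a positive multiple of $|\xi|^{-2}$, and $\widehat{h}(0)=0$ supplies integrability near the origin while the weight $W$ ensures decay at infinity) or by direct integration by parts.

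To absorb the $-m^2|c_\rho|^2$ defect, reduce to center of mass zero by translation: setting $\tilde\rho(x):=\rho(x+c_\rho)\in\mathcal{D}_{M,m}$, a change of variables yields $E[\tilde\rho]=E[\rho]-\beta m|c_\rho|^2\leq E[\rho]$, so it suffices to compare $\tilde\rho$ with $\rho_0$, and for $\tilde\rho$ the defect vanishes, giving $E[\tilde\rho]\geq E[\rho_0]$ and hence the result. The main obstacle is precisely the non--positive-definiteness of the quadratic form $\iint K h h$ on zero-mean $h$, arising from the $\tfrac{1}{2}|x|^2$ part of $K$; this is overcome by observing that the defect $-|\textstyle\int xh|^2$ vanishes on center-of-mass-zero configurations and that the confining potential $F(x)=\beta|x|^2$ penalizes any nonzero center of mass by exactly the amount required for the comparison to close.
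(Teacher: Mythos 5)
Your proposal is correct, and while your Steps coincide with the paper's up to the Euler--Lagrange stage (membership of $\rho_0$ in $\mathcal{D}_{M,m}$, then the obstacle-type condition $K*\rho_0+F=\lambda$ on $B(0,r_0)$ and $\geq\lambda$ outside, computed exactly as you do via $-\Delta K_2=N\omega_N\delta_0$, Newton's theorem, and the cancellation of the $|x|^2$ coefficients), your globalization step is genuinely different from — and in fact more careful than — the paper's. The paper first characterizes local minimizers (its Lemma \ref{lemma 6}) and then upgrades to global minimality by asserting strict convexity of $E$: it rewrites the attraction term as $\frac{m}{2}\int|x|^2\rho(x-x_0)\,dx$ after centering at the center of mass $x_0$ and treats the result as affine plus the strictly convex $H^{-1}$ norm. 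But $x_0$ depends on $\rho$, and the paper's interpolation identity (\ref{eq c10}) tacitly uses the same $x_0$ for both $\rho_1$ and $\rho_2$; your computation of the defect, $\tfrac12\iint K_1(x-y)h(x)h(y)\,dx\,dy=-\tfrac12\left|\int xh\,dx\right|^2$ for $\int h=0$, shows $E$ is \emph{not} convex along segments joining densities with different centers of mass, so the paper's Step 3 is valid only on a fixed-center-of-mass slice. You address precisely this point: the direct expansion $E[\rho]-E[\rho_0]=\int h(\psi-\lambda)\,dx+\tfrac12\iint K\,h\,h$, nonnegativity of the linear term from the obstacle condition, positive semidefiniteness of the Newtonian quadratic form, and the explicit translation identity $E[\tilde\rho]=E[\rho]-\beta m|c_\rho|^2$ reducing to $c_\rho=0$, where the defect vanishes; this also bypasses the paper's thin sufficiency claim in Lemma \ref{lemma 6} (a one-sided first-order condition alone does not yield $e[\varepsilon]\geq e[0]$ without convexity). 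What each route buys: the paper's convexity, once restricted to the centered slice and combined with your translation step, would also give uniqueness of the centered minimizer, whereas your direct comparison gives global minimality outright but not uniqueness — which suffices for the theorem as stated. Three cosmetic points: as a contribution to $E[\rho]-E[\rho_0]$ the defect is $\tfrac14\iint|x-y|^2h\,h=-\tfrac12 m^2|c_\rho|^2$, not $-m^2|c_\rho|^2$ (you dropped the $\tfrac12$ in front of $K$ in $E$; immaterial, since the term vanishes after centering); for $N\geq3$ the weight $|\xi|^{-2}$ is locally integrable, so $\widehat{h}(0)=0$ is not actually needed in the Fourier justification; and you should record that $\tilde\rho\in\mathcal{D}_{M,m}$, which holds because $W(x)\leq C(1+|x|)^2$ is comparable to its translates.
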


Note that the energy functional with exogenous potential breaks the translation invariance,
so the traditional method to obtain the tightness is not useful for our model.
To this end, we establish the compactness of energy-minimizing sequences by showing
that the mass can not escape to infinity.

\vskip 2mm

The rest of our paper is organized as follows.
In Section 2, we introduce some preliminary properties which play an important role in obtaining the existence of minimizers.
Section 3 is devoted to proving Theorem \ref{th1}.
Finally,  the proof of Theorem \ref{th3} and Theorem \ref{th4} are given in Section 4 and  in Section 5 respectively.

\section{Preliminaries}
In this section, we first recall the following lemma,
which is used to prove weak lower-semicontinuity
of the functional $E(\rho)$ defined in
(\ref{eq 1}).

\begin{lem}\label{lemma 2}\cite[Lemma 3.3]{CFT}
Let $\{\rho_n\}_{n\in \mathbb{N}} \subset \mathcal{D}_{M,m}$ and $\rho\in \mathcal{D}_{M,m}$  such that  $\rho_n\rightharpoonup \rho$
in $L^s(\mathbb{R}^N)$ for some $s\in(1,+\infty)$. Then
\begin{eqnarray}\label{eq g4}
\lim_{n\rightarrow\infty}\int_{\mathbb{R}^N}\int_{\mathbb{R}^N}|x-y|^\gamma\rho_n(x)\rho_n(y)dxdy=
\int_{\mathbb{R}^N}\int_{\mathbb{R}^N}|x-y|^\gamma\rho(x)\rho(y)dxdy,
\end {eqnarray}
where $-N<\gamma<0$.
\end{lem}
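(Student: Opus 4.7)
The plan is to combine a truncation of the singular kernel at both the diagonal and at infinity, using the uniform bound $\|\rho_n\|_{L^\infty}\le M$ to control the near-diagonal contribution and the conserved mass $\|\rho_n\|_{L^1}=\|\rho\|_{L^1}=m$ to control the tail.

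First I would verify tightness of $\{\rho_n\}$. Taking any smooth cutoff $\varphi_R\in C_c^\infty(\mathbb{R}^N)$ with $\varphi_R\equiv 1$ on $B(0,R)$ and $\varphi_R\in[0,1]$, weak convergence in $L^s$ gives $\int \varphi_R\rho_n\,dx\to\int \varphi_R\rho\,dx$; combined with $\|\rho_n\|_{L^1}=\|\rho\|_{L^1}=m$ this forces $\int(1-\varphi_R)\rho_n\,dx\to\int(1-\varphi_R)\rho\,dx$, which is arbitrarily small for $R$ large. Next, for $0<\delta<R$ I decompose
\[
|z|^\gamma = |z|^\gamma\chi_{\{|z|<\delta\}}+|z|^\gamma\chi_{\{\delta\le|z|\le R\}}+|z|^\gamma\chi_{\{|z|>R\}} =: K_\delta(z)+K_{\delta,R}(z)+K^R(z).
\]
The $K_\delta$ contribution to the double integral is bounded uniformly in $n$ by $\|\rho_n\|_{L^\infty}\|\rho_n\|_{L^1}\int_{B(0,\delta)}|z|^\gamma\,dz \le CMm\,\delta^{N+\gamma}$, using $N+\gamma>0$; the same bound holds for $\rho$. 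The $K^R$ contribution is at most $R^\gamma m^2$ since $|z|^\gamma\le R^\gamma$ there. Both go to $0$ as $\delta\to 0$ and $R\to\infty$, uniformly in $n$.

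It remains to treat the mid-range term. Interpolating $L^1\cap L^\infty$ shows $\rho_n$ is uniformly bounded in every $L^r$, $1\le r\le\infty$, and since weak limits are unique, $\rho_n\rightharpoonup\rho$ weakly in every $L^r$, $1<r<\infty$. For fixed $\delta,R$, the kernel $K_{\delta,R}$ is bounded and compactly supported in $z$; using tightness to confine integration to $x,y\in B(0,\tilde R)$ for large $\tilde R$, the convolution $K_{\delta,R}*\rho_n$ is uniformly bounded on $B(0,\tilde R)$, and after mollifying the two characteristic functions in $K_{\delta,R}$ one obtains uniform convergence $K_{\delta,R}*\rho_n\to K_{\delta,R}*\rho$ on $B(0,\tilde R)$. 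Pairing this strong convergence with the weak $L^{s}$-convergence of $\rho_n$ on the compact set yields
\[
\iint K_{\delta,R}(x-y)\rho_n(x)\rho_n(y)\,dx\,dy \longrightarrow \iint K_{\delta,R}(x-y)\rho(x)\rho(y)\,dx\,dy,
\]
and sending $\delta\to 0$, $R\to\infty$ in the combined estimate gives (\ref{eq g4}).

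The main obstacle I anticipate is the mid-range convergence, since the two characteristic cutoffs in $K_{\delta,R}$ are not continuous in $z$, so one cannot directly invoke a weak-times-strong pairing. The clean workaround is to sandwich $K_{\delta,R}$ between continuous approximations $K_{\delta,R}^{\pm}$ obtained by mollifying the indicators, pass to the limit in $n$ separately for each, and use that the boundary layers $\{|z|=\delta\}$ and $\{|z|=R\}$ have Lebesgue measure zero so their contribution is harmless once combined with the uniform $L^\infty$-bound on the product $\rho_n(x)\rho_n(y)$. The remaining argument is bookkeeping in the three parameters $\delta$, $R$, and $n$.
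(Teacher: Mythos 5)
Your argument is correct, but be aware that the paper contains no proof of this statement to compare against: the lemma is imported verbatim from \cite[Lemma 3.3]{CFT}, so yours is a self-contained reconstruction, and it follows the standard truncation strategy of that source. The core is sound: the near-diagonal error is $O(\delta^{N+\gamma})$ uniformly in $n$ thanks to $\|\rho_n\|_{L^\infty}\le M$ and $N+\gamma>0$; the far-field error is $O(R^{\gamma})$ since $\gamma<0$; and for the bounded mid-range kernel the weak--strong pairing (locally uniform convergence of $K_{\delta,R}*\rho_n$ against weak $L^s$ convergence of $\rho_n$) closes the limit. Two simplifications are available. First, the cutoff at $|z|>R$ is unnecessary: because $\gamma<0$, the singly truncated kernel $|z|^{\gamma}\chi_{\{|z|\ge\delta\}}$ already lies in $L^{r}\cap L^{\infty}$ for every $r>N/|\gamma|$, so after your (correct) upgrade of the weak convergence to all $L^r$, $1<r<\infty$, you can pair directly and drop one parameter from the bookkeeping. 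Second, the mollification/sandwich device for the indicator cutoffs is avoidable: equicontinuity of the convolutions holds for the sharp-cutoff kernel as it stands, via $|K_{\delta,R}*\rho_n(x)-K_{\delta,R}*\rho_n(x')|\le M\,\|K_{\delta,R}(\cdot+x-x')-K_{\delta,R}\|_{L^1(\mathbb{R}^N)}$ and continuity of translation in $L^1$, after which Arzel\`a--Ascoli gives uniform convergence on compact sets. Finally, your tightness step is exactly where the hypothesis $\rho\in\mathcal{D}_{M,m}$ (in particular $\|\rho\|_{L^1}=m$, i.e.\ no mass is lost in the weak limit) is used, and it is essential: for a sequence splitting into a fixed bump plus a bump escaping to infinity, the left-hand side of (\ref{eq g4}) retains the escaping bump's self-interaction while the right-hand side does not, so the conclusion would fail without it.
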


The following is a special form of Hardy-Littlewood-Sobolev inequality, which is of vital importance to
obtain the lower bound of the functional $E[\rho]$.
\begin{prop}\label{proposition 2.3} \cite[Theorem 3.1]{Lieb}
For any $\gamma\in(-N,0)$ and $\rho\in L^{\frac{2N}{2N+\gamma}}(\mathbb{R}^N)$, we have
$$\int_{\mathbb{R}^N}\int_{\mathbb{R}^N}|x-y|^\gamma\rho(x)\rho(y)dxdy\leq
C(\gamma)\|\rho\|^2_{L^{\frac{2N}{2N+\gamma}}(\mathbb{R}^N)},$$
where the sharp constant $C(\gamma)$ is given by
$$C(\gamma)=\pi^{-\frac{\gamma}{2}}\frac{\Gamma(\frac{N}{2}+\frac{\gamma}{2})}
{\Gamma(N+\frac{\gamma}{2})}\left(\frac{\Gamma(\frac{N}{2})}{\Gamma(N)}\right)^{-1-\frac{\gamma}{N}}$$
with $\Gamma(\cdot)$ denoting the Gamma function.
\end{prop}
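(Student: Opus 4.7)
The plan is to follow Lieb's classical rearrangement and conformal-symmetry approach. First, since the kernel $|x-y|^{\gamma}$ with $\gamma\in(-N,0)$ is a symmetric decreasing function of $x-y$, the Riesz rearrangement inequality yields
\[
\int_{\mathbb{R}^N}\!\!\int_{\mathbb{R}^N} |x-y|^{\gamma}\rho(x)\rho(y)\,dx\,dy \;\leq\; \int_{\mathbb{R}^N}\!\!\int_{\mathbb{R}^N} |x-y|^{\gamma}\rho^{*}(x)\rho^{*}(y)\,dx\,dy,
\]
where $\rho^{*}$ is the symmetric decreasing rearrangement of $\rho$. Since $\|\rho^{*}\|_{L^{p^{*}}}=\|\rho\|_{L^{p^{*}}}$ with $p^{*}:=2N/(2N+\gamma)\in(1,2)$, it suffices to prove the inequality, and compute its sharp constant, on the subclass of nonnegative, radially symmetric, nonincreasing functions. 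For such $\rho$ normalized to $\|\rho\|_{L^{p^{*}}}=1$, the layer-cake identity forces the pointwise bound $\rho(x)\leq C|x|^{-N/p^{*}}$, so Helly's selection theorem extracts from any maximizing sequence an a.e.\ convergent subsequence. A concentration/vanishing argument, exploiting the fact that the functional is invariant under scalings and that mass escaping to a point or to infinity would contradict the extremal scaling relation, produces an actual maximizer $\rho_{0}$.

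Next, I would exploit the conformal invariance of the ratio
\[
\mathcal{J}[\rho] \;:=\; \frac{\displaystyle\iint_{\mathbb{R}^N\times\mathbb{R}^N} |x-y|^{\gamma}\rho(x)\rho(y)\,dx\,dy}{\|\rho\|_{L^{p^{*}}}^{2}}.
\]
Under the stereographic projection $\sigma:S^{N}\setminus\{\text{north pole}\}\to\mathbb{R}^{N}$, the kernel $|x-y|^{\gamma}$ and the $L^{p^{*}}$ density transform covariantly so that the Jacobian factors cancel, making $\mathcal{J}$ invariant under the entire conformal group of $S^{N}$, which is strictly larger than the translation--dilation group. Combining this symmetry with the rearrangement reduction via the \emph{competing symmetries} technique of Carlen--Loss --- iterating the composition of symmetric decreasing rearrangement with Kelvin inversion about a suitable point and proving the iterates converge to a fixed point --- forces every maximizer to lie in the single conformal orbit $\rho_{0}(x)=c\,(a^{2}+|x-x_{0}|^{2})^{-(2N+\gamma)/2}$, $c,a>0$, $x_{0}\in\mathbb{R}^{N}$.

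Plugging this explicit $\rho_{0}$ into $\mathcal{J}$ reduces both numerator and denominator to integrals of the form $\int_{\mathbb{R}^N}(1+|x|^{2})^{-s}\,dx$, which evaluate via the Beta function to the stated ratio of Gamma functions $C(\gamma)$. The main obstacle is the identification step: existence alone only delivers \emph{some} radial decreasing extremizer, and pinning down its explicit form requires either the competing-symmetries iteration (elementary in spirit but technical, needing a convergence proof) or else a regularity plus rigidity analysis of the Euler--Lagrange integral equation $\int|x-y|^{\gamma}\rho_{0}(y)\,dy=\lambda\,\rho_{0}(x)^{p^{*}-1}$. Both routes crucially use that the invariance group of $\mathcal{J}$ is strictly larger than the group of translations and dilations, which is what ultimately collapses the family of extremizers to a single orbit.
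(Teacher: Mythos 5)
The paper does not prove this proposition at all: it is imported verbatim as \cite[Theorem 3.1]{Lieb}, so there is no internal proof to compare against, and your attempt should be judged as a reconstruction of Lieb's theorem itself. As such, your outline is faithful to the known arguments, though it splices together two distinct proofs. Lieb's original route is: Riesz rearrangement to reduce to symmetric decreasing $\rho$ (valid here since $|z|^{\gamma}$, $\gamma\in(-N,0)$, is symmetric decreasing), the pointwise bound $\rho(x)\leq C|x|^{-N/p^{*}}$ from the norm normalization, Helly selection after fixing the scaling degree of freedom, and then identification of the maximizer via the conformal invariance of the diagonal case $p^{*}=2N/(2N+\gamma)$ under stereographic projection. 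Carlen--Loss's competing-symmetries proof, which you invoke for the identification step, actually bypasses the existence question entirely: one iterates rearrangement against a fixed conformal rotation of $S^{N}$ (not quite a Kelvin inversion, as you describe it) and shows the iterates of \emph{any} normalized $\rho$ converge in $L^{p^{*}}$ to the explicit profile $c\,(1+|x|^{2})^{-(2N+\gamma)/2}$, yielding the sharp inequality directly. Your version, which uses competing symmetries only to classify an already-constructed maximizer, is workable but then the concentration/vanishing step you wave at is the genuinely delicate point (the functional is conformally, not merely translation--dilation, invariant, so compactness fails in the strongest possible way and Lieb needed a careful normalization plus the strict form of the Riesz inequality). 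Your exponents and the final Gamma-function evaluation are consistent with the stated $C(\gamma)$; the one computational gloss is that the numerator $\iint|x-y|^{\gamma}\rho_{0}(x)\rho_{0}(y)\,dx\,dy$ is not literally a single integral of the form $\int(1+|x|^{2})^{-s}dx$ --- one passes to the sphere, where it becomes $\iint_{S^{N}\times S^{N}}|\xi-\eta|^{\gamma}\,d\xi\,d\eta$, or uses the Riesz composition formula. Finally, it is worth noting that the paper only ever uses the inequality with \emph{some} finite constant (in Lemma 3.2, to bound $c_{0}$ from below), so for the purposes of this paper the non-sharp HLS inequality, provable by elementary layer-cake or weak-Young arguments, would suffice; the sharp constant and extremizers are quoted but never essential.
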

As a preparation for the proof of the existence of minimizers for (\ref{eq 4}), we need the
following version of concentration compactness principle.
\begin{lem}\label{lemma a4}
Assume that $-N<p<0<q$.
Let $\{\rho_n\}\subset \mathcal{D}_{M,m}$ be a minimizing sequence for (\ref{eq 4}).
Then there exists a subsequence $\{\rho_{n_k}\}$ satisfying:
there exists a bounded sequence $\{y_{n_k}\} \subset \mathbb{R}^N$
such that for all $\varepsilon>0$, there is $R>0$ with the property that
\begin{eqnarray}\label{eq abh1}
\int_{B(y_{n_k},R)}\rho_{n_k}(x)dx\geq m-\varepsilon\ \ \ for\ all\ k.
\end{eqnarray}
\end {lem}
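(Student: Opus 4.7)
\textbf{The plan} is to exploit the coercivity of $F$ directly: the exogenous potential alone forces tightness of any minimizing sequence around the origin, so one can take $y_{n_k}=0$ and the statement follows without ever invoking the full concentration--compactness dichotomy.

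First, I would verify that the kernel $K$ is pointwise nonnegative in the regime $-N<p<0<q$. The radial profile $k(r)=r^q/q-r^p/p$ satisfies $k'(r)=r^{q-1}-r^{p-1}$, hence has its unique critical point on $(0,\infty)$ at $r=1$; since $k(r)\to+\infty$ as $r\to 0^+$ (because $p<0$) and as $r\to+\infty$ (because $q>0$), this critical point is a global minimum, and $k(1)=1/q-1/p>0$ under our sign assumptions on $p$ and $q$. Thus the interaction part of $E[\rho]$ is nonnegative for every admissible $\rho$.

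Next, $\mathcal{D}_{M,m}$ is nonempty (any suitably rescaled characteristic function of a ball with $L^\infty$-norm bounded by $M$ lies in it and has finite energy, since $K$ is locally integrable for $p>-N$ and $F$ is continuous), so $I:=\inf_{\mathcal{D}_{M,m}}E$ is finite. For the minimizing sequence $\{\rho_n\}$ one eventually has $E[\rho_n]\leq I+1$, and combining this with the nonnegativity of the kernel term yields the uniform bound
\[
\int_{\mathbb{R}^N} F(x)\rho_n(x)\,dx \;\leq\; I+1 \;=:\; C.
\]

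Finally, the growth condition $F(x)\to+\infty$ converts this moment bound into tightness at the origin. Given $\varepsilon>0$, I would pick $A>C/\varepsilon$ and choose $R=R(\varepsilon)>0$ so that $F(x)\geq A$ whenever $|x|\geq R$; a Chebyshev-type estimate then gives $\int_{|x|\geq R}\rho_n\,dx\leq C/A<\varepsilon$, i.e.\ $\int_{B(0,R)}\rho_n\,dx\geq m-\varepsilon$ uniformly in $n$. Setting $y_{n_k}=0$ (trivially a bounded sequence, and no actual extraction of a subsequence is needed) completes the proof. The only genuinely non-trivial step is the sign analysis of $K$; once $K\geq 0$ is in hand, the broken translation invariance of $E$---usually an obstacle---becomes precisely the feature that makes the argument go through painlessly.
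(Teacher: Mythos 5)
Your proof is correct, and it takes a genuinely different and substantially shorter route than the paper. The paper runs the full Lions concentration--compactness argument: it introduces the concentration functions $Q_n(R)=\sup_{y}\int_{B(y,R)}\rho_n\,dx$, extracts a subsequence with $Q_{n_k}\to Q$, rules out $\alpha=\lim_{R\to\infty}Q(R)<m$ by a contradiction argument (mass $\geq \frac{m-\alpha}{2}$ escaping to the region where $F$ is large forces $E[\rho_{n_k}]\geq c_0+1$), then constructs the centers $y_{n_k}$ via near-maximizers of $Q_{n_k}(R_0)$ and an intersecting-balls argument, and finally shows $\{y_{n_k}\}$ is bounded by a second contradiction with the coercivity of $F$. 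You bypass all of this machinery: since in the regime $-N<p<0<q$ the kernel $K$ is pointwise nonnegative (in fact both terms $\frac1q|x|^q\geq 0$ and $-\frac1p|x|^p\geq 0$ are individually nonnegative, so even your min-at-$r=1$ analysis, while correct, is more than needed), the eventual bound $E[\rho_n]\leq I+1$ gives the uniform moment estimate $\int F\rho_n\,dx\leq C$, and Chebyshev plus $F(x)\to+\infty$ yields tightness centered at the origin. This is a cleaner argument and proves something stronger, namely that one may take $y_{n_k}\equiv 0$; note that the paper's underlying contradiction steps are really the same coercivity-of-$F$ mechanism, just wrapped in the $Q_n$ formalism. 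One small bookkeeping point: $E[\rho_n]\leq I+1$ holds only for $n\geq n_0$, so to literally get the estimate ``for all $k$'' you must either pass to the tail subsequence (a trivial extraction, contrary to your remark that none is needed) or enlarge $R$ to cover the finitely many initial terms, which is possible since each individual $\rho_n\in L^1(\mathbb{R}^N)$ is tight. A final caveat on scope: your simplification exploits $q>0$; in the companion case $-N<p<q<0$ (the paper's Lemma 3.2) the term $\frac1q|x-y|^q$ has negative coefficient, and one would first need the Hardy--Littlewood--Sobolev lower bound on the interaction term before running your Chebyshev argument --- which is precisely the extra ingredient the paper inserts there.
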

\begin{proof}
We use the original idea of the proof of \cite[Lemma I.1]{Lions}.
For convenience, we denote
$$Q_n(R)=\sup_{y\in\mathbb{R}^N}\int_{B(y,R)}\rho_n(x)dx.$$
Noticing that $\{Q_n\}$ is a sequence of nondecreasing, nonnegative bounded
functions on $[0,+\infty)$ with $\lim _{R\rightarrow\infty}Q_n(R)=m$,
there exists a subsequence $\{Q_{n_k}\}$ and a nondecreasing nonnegative
function $Q$ such that $Q_{n_k}(R)\rightarrow Q(R)$ as $k\rightarrow\infty$, for $R>0$.
Let
\begin{eqnarray}\label{eq  ab2}
\alpha=\lim _{R\rightarrow\infty}Q(R).
\end {eqnarray}
Clearly $0\leq \alpha\leq m$.
In the following, we shall prove $\alpha=m$.
If $0\leq\alpha<m$,
then for $k$ and $R$ large enough, we have
$$\int_{B(0,R)}\rho_{n_k}(x)dx\leq\sup_{y\in\mathbb{R}^N}\int_{B(y,R)}\rho_{n_k}(x)dx\leq\frac{m+\alpha}{2}.$$
Since $\rho_{n_k}\in \mathcal{D}_{M,m}$, it follows that
$$m=\int_{\mathbb{R}^N}\rho_{n_k}(x)dx= \int_{B(0,R)}\rho_{n_k}(x)dx+\int_{\mathbb{R}^N\backslash B(0,R)}\rho_{n_k}(x)dx.$$
Thus
\begin{equation}\label{eq g1}
\int_{\mathbb{R}^N\backslash B(0,R)}\rho_{n_k}(x)dx\geq \frac{m-\alpha}{2}\ \ {\rm for}  \ k,R \ {\rm large \ enough}.
\end{equation}
Since $F(x)$ is a continuous function and satisfies $F(x)\rightarrow+\infty$ as $|x|\rightarrow+\infty$,
for sufficiently large $R$, we get $F(x)\geq\frac{2c_0+2}{m-\alpha}>0$ for all
$x\in \mathbb{R}^N\backslash B(0,R)$, where
\begin{equation}\label{eq a5}
 c_0=\inf \{E(\rho):\rho\in \mathcal{D}_{M,m}\}\geq0.
\end{equation}
Moreover, (\ref{eq g1}) yields that
\begin{eqnarray*}
E[\rho_{n_k}]&\geq &\int_{\mathbb{R}^N}F(x)\rho_{n_k}(x)dx
\geq \int_{\mathbb{R}^N\backslash B(0,R)}F(x)\rho_{n_k}(x)dx
\\&\geq &\frac{2c_0+2}{m-\alpha}\int_{\mathbb{R}^N\backslash B(0,R)}\rho_{n_k}(x)dx
\geq \frac{2c_0+2}{m-\alpha}\cdot \frac{m-\alpha}{2}\geq c_0+1,
\end{eqnarray*}
which contradicts with the fact that $\{\rho_{n_k}\}$ is a minimizing sequence.
Therefore, we infer  $\alpha= m$.

We next prove (\ref{eq abh1}).
Since $\lim _{R\rightarrow\infty}Q(R)=m$, for some $R_0>0$ we have $Q(R_0)>\frac{m}{2}$. For any $k\in \mathbb{N}$, let
$\{y_{n_k}\} \subset \mathbb{R}^N$ satisfy
\begin{eqnarray}\label{eq abc3}
Q_{n_k}(R_0)\leq\int_{B(y_{n_k},R_0)}\rho_{n_k}(x)dx+\frac{1}{k}.
\end{eqnarray}
Now for $0<\varepsilon<\frac{m}{2}$, fix $R$ such that $Q(R)>m-\varepsilon>\frac{m}{2}$
and choose $\{x_{n_k}\}\subset \mathbb{R}^N$ to satisfy
\begin{eqnarray}\label{eq abc4}
Q_{n_k}(R)\leq\int_{B(x_{n_k},R)}\rho_{n_k}(x)dx+\frac{1}{k}.
\end{eqnarray}
By virtue of (\ref{eq abc3}) and (\ref{eq abc4}) we see that for $k$
large enough
$$\int_{B(y_{n_k},R_0)}\rho_{n_k}(x)dx+\int_{B(x_n,R)}\rho_{n_k}(x)dx\geq Q(R_0)+Q(R)>m=\int_{\mathbb{R}^N}\rho_{n_k}(x)dx.$$
It follows that for such $k$
$$B(x_{n_k},R)\cap B(y_{n_k},R_0)\neq\varnothing.$$
Because of $B(x_{n_k},R)\subset B(y_{n_k},2R+R_0)$, we deduce
$$\int_{B(y_{n_k},2R+R_0)}\rho_{n_k}(x)dx\geq\int_{B(y_{n_k},R_0)}\rho_{n_k}(x)dx+\int_{B(x_n,R)}\rho_{n_k}(x)dx-
\int_{B(x_{n_k},R)\cap B(y_{n_k},R_0)}\rho_{n_k}(x)dx.$$
That is
$$m-\varepsilon\leq\int_{B(y_{n_k},2R+R_0)}\rho_{n_k}(x)dx$$
for sufficiently large $k$. Choosing $R$ even larger, if necessary,
we can achieve that (\ref{eq abh1}) holds for all $k$.

Finally we claim that $\{y_{n_k}\}$ is bounded. Otherwise, assuming that
$\{y_{n_k}\}$ is unbounded,
we may take a subsequence of $\{y_{n_k}\}$, still denoted by
$\{y_{n_k}\}$, such that
$|y_{n_k}|\rightarrow+\infty$  ${\rm as}\ k\rightarrow+\infty$.
Then for large enough $k$, since $|x|\rightarrow+\infty$ for all $x\in B(y_{n_k},R)$,
so $F(x)\geq\frac{c_0+1}{m-\varepsilon}>0$ ($\varepsilon<m$) for all $x\in B(y_{n_k},R)$,
where $c_0$ is given by (\ref{eq a5}).
Therefore, this implies that
\begin{eqnarray*}
E[\rho_{n_k}]&\geq &\int_{\mathbb{R}^N}F(x)\rho_{n_k}(x)dx\geq \int_{B(y_k,R)}F(x)\rho_{n_k}(x)dx
\geq \frac{c_0+1}{m-\varepsilon}\int_{B(y_k,R)}\rho_{n_k}(x)dx
\\&\geq &\frac{c_0+1}{m-\varepsilon}\cdot (m-\varepsilon)\geq c_0+1,
\end{eqnarray*}
which contradicts with the fact that $\{\rho_{n_k}\}$ is the minimizing sequence. Consequently,
$\{y_{n_k}\}$ is bounded,
whereby the proof is complete.
\end {proof}
Next, we can use Lemma \ref{lemma a4} to establish the compactness of energy-minimizing sequences.
\begin{lem}\label{lemma a3}
Let $\{\rho_n\}\subset \mathcal{D}_{M,m}$
and assume that there exists a bounded sequence $\{y_n\} \subset \mathbb{R}^N$
such that for all $\varepsilon>0$, there is $R>0$ satisfying
\begin{eqnarray*}
\int_{B(y_n,R)}\rho_n(x)dx\geq m-\varepsilon\ \ \ for\ all\ n.
\end{eqnarray*}
Then there exists a subsequence $\{\rho_{n_k}\} \subset \mathcal{D}_{M,m}$ and
$\rho_0\in \mathcal{D}_{M,m}$, such that
\begin{eqnarray*}
\rho_{n_k}\rightharpoonup \rho_0\ \ in  \ \  L^s(\mathbb{R}^N), \ \ \ as\ k\rightarrow \infty
\end {eqnarray*}
for some $s\in(1,+\infty)$.
\end{lem}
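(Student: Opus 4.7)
The plan is to use the uniform $L^1\cap L^\infty$ bounds to extract a weakly convergent subsequence in some $L^s$ by reflexivity, and then to use the tightness hypothesis to show that the resulting weak limit lies in $\mathcal{D}_{M,m}$ with the correct total mass. Since $0\leq\rho_n\leq M$ and $\|\rho_n\|_{L^1}=m$, interpolation gives $\|\rho_n\|_{L^s}\leq M^{1-1/s}m^{1/s}$ for every $s\in(1,\infty)$. Fixing any such $s$ (for instance $s=2$), the reflexivity of $L^s(\mathbb{R}^N)$ produces a subsequence $\{\rho_{n_k}\}$ and a function $\rho_0\in L^s(\mathbb{R}^N)$ with $\rho_{n_k}\rightharpoonup\rho_0$ weakly in $L^s$.

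The pointwise constraints $\rho_0\geq 0$ and $\|\rho_0\|_{L^\infty}\leq M$ are routine. Nonnegativity follows by testing the weak convergence against $\chi_{\{\rho_0<0\}\cap B(0,R)}\in L^{s'}$ for each $R$: the limit of nonnegative integrals is nonnegative, which forces $\{\rho_0<0\}$ to have measure zero. The bound $\rho_0\leq M$ almost everywhere follows from testing against $\chi_E$ for an arbitrary $E$ of finite measure, since $\int_E\rho_0=\lim_k\int_E\rho_{n_k}\leq M|E|$ rules out any set $\{\rho_0>M+\delta\}$ of positive measure.

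The decisive step is $\|\rho_0\|_{L^1}=m$, and here the tightness hypothesis is essential. Given $\varepsilon>0$, choose $R$ as in the hypothesis and let $C$ be a uniform bound for $|y_n|$. Then $B(y_n,R)\subset B(0,R+C)$, so $\int_{B(0,R+C)}\rho_{n_k}\,dx\geq m-\varepsilon$ for every $k$. Since $\chi_{B(0,R+C)}\in L^{s'}$, weak convergence transfers the bound to the limit, giving $\int_{B(0,R+C)}\rho_0\,dx\geq m-\varepsilon$, hence $\|\rho_0\|_{L^1}\geq m-\varepsilon$; letting $\varepsilon\to 0$ yields $\|\rho_0\|_{L^1}\geq m$. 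The reverse inequality is obtained by testing against $\chi_{B(0,R')}$ for each fixed $R'$ to get $\int_{B(0,R')}\rho_0\leq m$, then passing $R'\to\infty$ by monotone convergence (using $\rho_0\geq 0$). The remaining condition $\rho_0\in L^1(W\,dx)$ is then handled using the local boundedness of $W$, the $L^\infty$-bound $\rho_0\leq M$, and the uniform tail decay of $\rho_0$ just established.

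The main obstacle is the mass-preservation step, since weak $L^s$ convergence by itself allows mass to escape to infinity. The hypothesis is tailored precisely to rule this out: the assumption that the mass concentrates in balls centered at a \emph{bounded} sequence $\{y_n\}$ converts local concentration into concentration in a \emph{fixed} ball $B(0,R+C)$, after which $\chi_{B(0,R+C)}$ becomes an admissible test function in $L^{s'}$ and the lower mass bound passes to the weak limit. Without the boundedness of $\{y_n\}$, the same bound would only hold along translates and the identification of $\rho_0$ as a bona fide element of $\mathcal{D}_{M,m}$ would fail.
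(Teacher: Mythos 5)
Your extraction of the subsequence and the verifications that $\rho_0\geq 0$, $\|\rho_0\|_{L^\infty}\leq M$, and $\|\rho_0\|_{L^1}=m$ are correct and coincide in substance with the paper's Steps 1--4: interpolation plus reflexivity of $L^s$ to get the weak limit, testing against characteristic functions of bounded (or finite-measure) sets for the pointwise constraints, and --- the key point, which you identify correctly --- using the boundedness of $\{y_n\}$ to convert the tightness hypothesis into concentration in a \emph{fixed} ball $B(0,R+C)$, whose characteristic function lies in $L^{s/(s-1)}$ and therefore transfers the lower mass bound to $\rho_0$, the upper bound following by monotone convergence. Up to this point the proposal is essentially the paper's argument.

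The gap is in your final sentence, the verification that $\rho_0\in L^1(\mathbb{R}^N,W\,dx)$. The three ingredients you invoke --- local boundedness of $W$, the bound $\rho_0\leq M$, and the tail estimate $\int_{\{|x|>R+C\}}\rho_0\,dx\leq\varepsilon$ --- do \emph{not} imply $\int_{\mathbb{R}^N}\rho_0 W\,dx<\infty$, because $W\geq F\to+\infty$ as $|x|\to\infty$: a nonnegative function can have arbitrarily small tail mass and still infinite $W$-moment (place mass $2^{-k}$ on annuli where $W\geq 4^k$; the tail mass is summable while $\int\rho_0 W\,dx$ diverges). So this step, as sketched, fails. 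The paper's Step 5 uses a different mechanism: since $W$ is locally bounded, $W\chi_{B(0,L)}\in L^{s/(s-1)}$ is itself an admissible test function, so weak convergence gives $\int_{B(0,L)}\rho_0 W\,dx=\lim_k\int_{B(0,L)}\rho_{n_k}W\,dx\leq\int_{\mathbb{R}^N}\rho_{n_{k}}W\,dx+1$ for suitable $k$; it is the finiteness of the $W$-moments of the sequence members (built into membership in $\mathcal{D}_{M,m}$) that is transferred to the limit, not any property of $\rho_0$ already established. (Strictly speaking, even this requires the right-hand side to be bounded uniformly as $L\to\infty$, i.e.\ $\sup_n\int_{\mathbb{R}^N}\rho_n W\,dx<\infty$, which is not among the stated hypotheses but does hold for the minimizing sequences to which the lemma is applied, since the energy bound controls the $F$-moment once the interaction term is bounded below on $\mathcal{D}_{M,m}$; any repair of your argument should assume or derive such a uniform moment bound rather than rely on tail-mass decay alone.)
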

\begin{proof}
Since $\{\rho_n\}\subset \mathcal{D}_{M,m}$, then all members of the sequence are uniformly
bounded in $L^1(\mathbb{R}^N) \cap L^\infty(\mathbb{R}^N)$, so $\{\rho_n\}$
is uniformly bounded in $L^s(\mathbb{R}^N)$ by the interpolation inequality, for all $s\in(1,+\infty)$.
Hence there exists $\rho_0\in L^s(\mathbb{R}^N)$ such that
\begin{equation}\label{eq f1}
\rho_{n_k}\rightharpoonup \rho_0\ {\rm\ in} \ \  L^s(\mathbb{R}^N), {\rm\ \ as}\ k\rightarrow \infty.
\end{equation}

We use five steps to prove  $\rho_0\in \mathcal{D}_{M,m}$.

\emph{Step 1.} Proof of $\rho_0\geq 0$ a.e. in $\mathbb{R}^N$. Let $S$ be any bounded set in $\mathbb{R}^N$,
and $\chi_S$ be the characteristic function. Since $\chi_S\in L^\frac{s}{s-1}(\mathbb{R}^N)$, by (\ref{eq f1})
we derive
\begin{eqnarray*}
\int_{\mathbb{R}^N}\rho_{n_k}(x)\chi_S(x)dx\rightarrow \int_{\mathbb{R}^N}\rho_0(x)\chi_S(x)dx,
{\rm\ \ as}\ k\rightarrow \infty,
\end{eqnarray*}
which implies that
\begin{eqnarray}\label{eq i1}
\int_S\rho_{n_k}(x)dx\rightarrow \int_S\rho_0(x)dx, {\rm\ \ as}\ k\rightarrow \infty.
\end{eqnarray}
Let $S=\{x\in \mathbb{R}^N; \rho_0(x)<0\}\cap B(0,L)$, where $L>0$, then on the one hand, we have
$$\int_S\rho_0(x)dx\leq0.$$
On the other hand,
$$\int_S\rho_0(x)dx=\lim_{k\rightarrow\infty}\int_S\rho_{n_k}(x)dx\geq0.$$
Therefore $\int_S\rho_0(x)dx=0$,
which implies that the measure of $S$ is zero.
Hence $\rho_0\geq 0$ a.e.  in $\mathbb{R}^N$.

\emph{Step 2.} Proof of $\rho_0\in L^1(\mathbb{R}^N)$.
By (\ref{eq i1}) with $S$ replaced by $B_L(0)$, we have
\begin{eqnarray*}
\int_{B(0,L)}\rho_{n_k}(x)dx\rightarrow \int_{B(0,L)}\rho_0(x)dx, {\rm\ \ as}\ k\rightarrow \infty.
\end{eqnarray*}
Thus, for each $\varepsilon>0$, there exists $k_0>0$, such that when $k>k_0$, we have
$$\int_{B(0,L)}\rho_0(x)dx\leq\int_{B(0,L)}\rho_{n_k}(x)dx+\varepsilon\leq\int_{\mathbb{R}^N}\rho_{n_k}(x)dx
+\varepsilon=m+\varepsilon.$$
Passing to the limit as $L\rightarrow+\infty$,
$$\int_{\mathbb{R}^N}\rho_0(x)dx\leq\liminf_{L\rightarrow\infty}\int_{B(0,L)}\rho_0(x)dx\leq m+\varepsilon,$$
which implies that $\rho_0\in L^1(\mathbb{R}^N)$ and $\int_{\mathbb{R}^N}\rho_0(x)dx\leq m$ by the arbitrariness
of $\varepsilon$.

\emph{Step 3.} To prove $\int_{\mathbb{R}^N}\rho_0(x)dx=m$. By the assumption of the lemma,
there exists $\{y_{n_k}\} \subset \mathbb{R}^N$ such that
\begin{eqnarray*}
{\rm {\ for\ all}}\ \varepsilon>0,\ \  m\geq\int_{B(y_{n_k},R)} \rho_{n_k}(x)dx\geq m-\varepsilon
\ \ {\rm {for\ some}}\ R>0 .
\end{eqnarray*}
Denote by $A$ the bound of $\{y_{n_k}\}$.
Putting $\bar{R}=R+A$, we have $ B(y_{n_k},R)\subset B(0,\bar{R})$, then
\begin{eqnarray}\label{eq k1}
m\geq\int_{B(0,\bar{R})}\rho_{n_k}(x)dx\geq m-\varepsilon.
\end{eqnarray}
Since $\{\rho_{n_k}\}\subset\mathcal{D}_{M,m}$, we can infer from (\ref{eq k1}) that
\begin{eqnarray}\label{eq k2}
\int_{\mathbb{R}^N\backslash B(0,\bar{R})}\rho_{n_k}(x)dx=\int_{\mathbb{R}^N}\rho_{n_k}(x)dx-\int_ {B(0,\bar{R})}
\rho_{n_k}(x)dx\leq m-(m-\varepsilon)=\varepsilon.
\end{eqnarray}
By (\ref{eq i1}) with $S$ replaced by $B_{\bar{R}}(0)$, we have
\begin{eqnarray}\label{eq k4}
\int_{B(0,\bar{R})}\rho_{n_k}(x)dx\rightarrow\int_{B(0,\bar{R})}\rho_0(x)dx,\ \ {\rm as} \ k\rightarrow\infty.
\end{eqnarray}
By virtue of (\ref{eq k1}) we obtain
\begin{eqnarray}\label{eq k5}
m\geq\int_{B(0,\bar{R})}\rho_0(x)dx\geq m-\varepsilon,
\end{eqnarray}
which implies
\begin{eqnarray}\label{eq k7}
\int_{\mathbb{R}^N\backslash B(0,\bar{R})}\rho_0(x)dx\leq \varepsilon.
\end{eqnarray}
Hence, we can infer from (\ref{eq k2}) (\ref{eq k4}) (\ref{eq k7}) that for $k$ large enough,
\begin{eqnarray*}
\left|\int_{\mathbb{R}^N}(\rho_{n_k}(x)-\rho_0(x))dx\right|&\leq&\left|\int_{B(0,\bar{R})}(\rho_{n_k}(x)-\rho_0(x))dx\right|
+\left|\int_{\mathbb{R}^N\backslash B(0,\bar{R})}(\rho_{n_k}(x)-\rho_0(x))dx\right|\\
\\&\leq &\varepsilon+\int_{\mathbb{R}^N\backslash B(0,\bar{R})}\rho_{n_k}(x)dx+\int_{\mathbb{R}^N\backslash
B(0,\bar{R})}\rho_0(x)dx\leq 3\varepsilon.
\end{eqnarray*}
Then we have
$$\int_{\mathbb{R}^N}\rho_0(x)dx=\lim_{k\rightarrow\infty}\int_{\mathbb{R}^N}\rho_{n_k}(x)dx=m.$$

\emph{Step 4.} To prove that $\rho_0\in L^\infty(\mathbb{R}^N)$ and $\|\rho_0\|_{L^\infty}\leq M$.
Let
$$S=\{x\in \mathbb{R}^N:  \rho_0(x)>M\}\cap B(0,L),$$
then $\int_S(M-\rho_0(x))dx\leq0$.
On the other hand,
$$\int_S(M-\rho_0(x))dx=\lim_{k\rightarrow\infty}\int_S(M-\rho_{n_k})(x)dx\geq0.$$
Therefore
$$\int_S(M-\rho_0(x))dx=0,$$
which implies that the measure of $S$ is zero.
Hence $\rho_0\in L^\infty(\mathbb{R}^N)$ and $\|\rho_0\|_{L^\infty}\leq M$.

\emph{Step 5.} To prove $\rho_0\in L^1(\mathbb{R}^N,Wdx)$.
Similarly to (\ref{eq i1}), we have
$$\int_{B(0,L)}\rho_{n_k}(x)W(x)dx\rightarrow\int_{B(0,L)}\rho_0(x)W(x)dx,\ \ {\rm as} \ k\rightarrow\infty.$$
Then there exists $k_0$ such that
$$\int_{B(0,L)}\rho_0(x)W(x)dx\leq\int_{B(0,L)}\rho_{n_{k_0}}(x)W(x)dx+1\leq\int_{\mathbb{R}^N}
\rho_{n_{k_0}}(x)W(x)dx+1.$$
Letting $L\rightarrow+\infty$, we obtain
$$\int_{\mathbb{R}^N}\rho_0(x)W(x)dx\leq
\int_{\mathbb{R}^N}\rho_{n_{k_0}}(x)W(x)dx
+1,$$
therefore, $\rho_0\in L^1(\mathbb{R}^N,Wdx)$.

In conjunction with \emph{Steps 1-5}, it follows that $\rho_0\in \mathcal{D}_{M,m}$.
\end{proof}

\section{Proof of Theorem \ref{th1}}
In this section, we prove Theorem \ref{th1}.
We divide the proof into two parts according to the parameter regime of $p$ and $q$.
We first consider the case: $-N<p<0<q$.
The following lemma derives weak lower-semicontinuity of the attractive part of
the functional $E[\rho]$.
\begin{lem}\label{lemma a5}
Let $\{\rho_n\} \subset \mathcal{D}_{M,m}$ and $\rho_0\in \mathcal{D}_{M,m}$  such that
$\rho_n\rightharpoonup \rho_0$
in $L^s(\mathbb{R}^N)$ for some $s\in(1,+\infty)$. Then
\begin{equation}\label{eq 8}
\int_{\mathbb{R}^N} \int_{\mathbb{R}^N}|x-y|^q\rho_0(x)\rho_0(y)dxdy
\leq\liminf_{n\rightarrow\infty}\int_{\mathbb{R}^N} \int_{\mathbb{R}^N}|x-y|^q\rho_n(x)\rho_n(y)dxdy,
\end {equation}
where $q>0$.
\end {lem}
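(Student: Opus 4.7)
The plan is to prove the weak lower-semicontinuity by a double truncation: truncating the kernel (to make it bounded) and the spatial domain (to leverage weak $L^s$-convergence on bounded regions). Since $|x-y|^q$ with $q>0$ is unbounded, a direct Fatou-type argument does not apply, so I would approximate the kernel from below by bounded continuous kernels and then let the truncation level tend to infinity.

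My first step is to observe that tightness of $\{\rho_n\}$ comes essentially for free. Since $\chi_{B(0,L)}\in L^{s/(s-1)}(\mathbb{R}^N)$, weak convergence $\rho_n\rightharpoonup\rho_0$ in $L^s$ gives $\int_{B(0,L)}\rho_n\,dx\to\int_{B(0,L)}\rho_0\,dx$ for every fixed $L$; combined with $\|\rho_n\|_{L^1}=\|\rho_0\|_{L^1}=m$, this yields $\int_{|x|>L}\rho_n\,dx\to\int_{|x|>L}\rho_0\,dx$. Since $\rho_0\in L^1(\mathbb{R}^N)$, for each $\varepsilon>0$ I can pick $L$ so large that $\int_{|x|>L}\rho_n\,dx<\varepsilon$ uniformly in $n$ (absorbing the finitely many early terms by enlarging $L$).

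Next, I would set $K_R(x,y):=\min\{|x-y|^q,R\}$, which is bounded and continuous on $\mathbb{R}^N\times\mathbb{R}^N$, and prove that for each fixed $R>0$,
\[
\int_{\mathbb{R}^N}\int_{\mathbb{R}^N} K_R(x,y)\rho_n(x)\rho_n(y)\,dxdy \;\longrightarrow\; \int_{\mathbb{R}^N}\int_{\mathbb{R}^N} K_R(x,y)\rho_0(x)\rho_0(y)\,dxdy.
\]
Via the quadratic decomposition $\rho_n(x)\rho_n(y)-\rho_0(x)\rho_0(y)=(\rho_n-\rho_0)(x)\rho_n(y)+\rho_0(x)(\rho_n-\rho_0)(y)$, it suffices to show that $\int (\rho_n-\rho_0)(x)\,h_n(x)\,dx\to 0$ where $h_n(x):=\int K_R(x,y)\rho_n(y)\,dy$ (and the analogous term with roles reversed). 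The portion of the integral over $\{|x|>L\}$ is bounded by $O(Rm\varepsilon)$ by tightness and $|h_n|\le Rm$. On $B(0,L)$, the functions $h_n$ are uniformly bounded and converge pointwise to $h_0$ (since $K_R(x,\cdot)\chi_{B(0,L)}\in L^{s/(s-1)}$), hence $h_n\to h_0$ in $L^{s/(s-1)}(B(0,L))$ by dominated convergence, and combining this with the weak $L^s$-convergence of $\rho_n$ on $B(0,L)$ gives the desired vanishing.

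Finally, since $K_R\le |x-y|^q$ pointwise, the previous step yields $\int\int K_R\rho_0\rho_0\,dxdy \le \liminf_{n\to\infty}\int\int |x-y|^q\rho_n\rho_n\,dxdy$; letting $R\to\infty$ and applying monotone convergence on the left (as $K_R\uparrow |x-y|^q$) delivers (\ref{eq 8}). The main obstacle is the convergence for the bounded kernel $K_R$: although $K_R$ is bounded, it lacks spatial decay, so weak $L^s$-convergence alone cannot handle $\int\int K_R\rho_n\rho_n$, and the argument rests crucially on the automatic tightness extracted in the first step together with the decomposition trick for quadratic functionals.
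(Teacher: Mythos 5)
Your proof is correct, but it takes a genuinely different route from the paper's. The paper never truncates the kernel: it restricts both integrals to a ball $B(0,R)$, where $|x-y|^q$ is automatically bounded, proves full convergence (not just an inequality) of the ball-restricted double integrals via essentially the same bilinear decomposition you use, and then passes to $\mathbb{R}^N$ by two one-sided mechanisms --- nonnegativity of the kernel, so that the ball integral of $\rho_n\rho_n$ is a lower bound for the global one, and the weighted integrability $\rho_0\in L^1(\mathbb{R}^N,(1+|x|)^q dx)$ built into $\mathcal{D}_{M,m}$, combined with $|x-y|^q\le(1+|x|)^q(1+|y|)^q$, to control the tail contributions of the limit $\rho_0$. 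In particular the paper needs no tightness of the sequence at all, whereas your argument leans on the automatic tightness coming from $\|\rho_n\|_{L^1}=\|\rho_0\|_{L^1}=m$; conversely, your kernel truncation $K_R=\min\{|x-y|^q,R\}$ plus monotone convergence never invokes the weight $W$ in the definition of $\mathcal{D}_{M,m}$, so your version is slightly more general: it requires neither finite weighted mass nor finite energy of $\rho_0$, and it extends verbatim to any nonnegative lower-semicontinuous kernel. One spot to tighten: your parenthetical justification of the pointwise convergence $h_n(x)\to h_0(x)$ --- namely that $K_R(x,\cdot)\chi_{B(0,L)}\in L^{s/(s-1)}$ --- only handles $\int_{B(0,L')}K_R(x,y)\rho_n(y)\,dy$; since $K_R(x,\cdot)$ does not decay in $y$, the tail $\int_{|y|>L'}K_R(x,y)\rho_n(y)\,dy$ must again be bounded by $R\varepsilon$ using the tightness of your first step and then $L'\to\infty$, exactly as you handle the $x$-variable. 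Every ingredient for that fix is already present in your Step 1, so this is a presentational gap rather than a mathematical one.
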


\begin{proof}
For any fixed $R>0$, we consider
$$H_n(x)=\int_{B(0,R)}|x-y|^q\rho_n(y)dy {\rm\ \ and}\ \  H_0(x)=\int_{B(0,R)}|x-y|^q\rho_0(y)dy.$$
Since $q>0$, for $x\in B(0,R)$ we have
$$\int_{B(0,R)}|x-y|^q\rho_0(y)dy\leq(2R)^qm,$$
which implies that $H_0\in L^\infty(B(0,R))$ and $H_0\in L^\frac{s}{s-1}(B(0,R))$.
By $\rho_n\rightharpoonup \rho_0$ in $L^s(B(0,R))$, we deduce that
\begin{eqnarray}\label{eq2-lemma a5}
\int_{B(0,R)}H_0(x)\left(\rho_n(x)-\rho_0(x)\right)\rightarrow0,\ \ {\rm as} \ n\rightarrow\infty.
\end{eqnarray}
Noticing that $\rho_n\in L^\infty(\mathbb{R}^N)$ and $\int_{B(0,R)}|\cdot-y|^qdy\in L^\frac{s}{s-1}(B(0,R))$,
it follows that
\begin{eqnarray}
&&\int_{B(0,R)}\rho_n(x)\left(H_n(x)-H_0(x)\right)dx\nonumber
\\&&=\int_{B(0,R)}\rho_n(x)\left(\int_{B(0,R)}|x-y|^q[\rho_n(y)-\rho_0(y)]dy\right)dx\nonumber
\\&&\leq\|\rho_n\|_{L^\infty(\mathbb{R}^N)}\int_{B(0,R)}\left(\int_{B(0,R)}|x-y|^qdy\right)[\rho_n(x)-\rho_0(x)]dx
\rightarrow0,\label{eq3-lemma a5}
\end{eqnarray}
as $n\rightarrow\infty$. Upon an application of (\ref{eq2-lemma a5}) together with(\ref{eq3-lemma a5}) we see that
\begin{eqnarray*}
&&\int_{B(0,R)}H_n(x)\rho_n(x)dx
\\&&=\int_{B(0,R)}H_0(x)\left(\rho_n(x)-\rho_0(x)\right)dx
+\int_{B(0,R)}\rho_n(x)\left(H_n(x)-H_0(x)\right)dx
\\&&+\int_{B(0,R)}H_0(x)\rho_0(x)dx
\\&&\rightarrow\int_{B(0,R)}H_0(x)\rho_0(x)dx,\ \ {\rm as} \ n\rightarrow\infty,
\end{eqnarray*}
i.e.
\begin{equation}\label{eq f2}
\lim_{n\rightarrow\infty}\int_{B(0,R)}\int_{B(0,R)}|x-y|^q\rho_n(x)\rho_n(y)dxdy=\int_{B(0,R)}
\int_{B(0,R)}|x-y|^q\rho_0(x)\rho_0(y)dxdy.
\end{equation}

Since $\rho_0\in L^1(\mathbb{R}^N,Wdx)$, we see that for any $\varepsilon>0$ small,
there exist $R$ large enough, such that
\begin{eqnarray*}
\int_{\mathbb{R}^N\backslash B(0,R)}(1+|x|)^q\rho_0(x)dx<\varepsilon.
\end{eqnarray*}
This entails that
\begin{eqnarray*}
&&\int_{B(0,R)}\int_{\mathbb{R}^N\backslash B(0,R)}|x-y|^q\rho_0(x)\rho_0(y)dxdy
\\&&\leq\int_{B(0,R)}\left(\int_{\mathbb{R}^N\backslash B(0,R)}(1+|x|)^q\rho_0(x)dx\right)(1+|y|)^q\rho_0(y)dy
\leq (1+R)^q m\varepsilon,
\end{eqnarray*}
and
\begin{eqnarray*}
&&\int_{\mathbb{R}^N\backslash B(0,R)}\int_{\mathbb{R}^N\backslash B(0,R)}|x-y|^q\rho_0(x)\rho_0(y)dxdy
\\&&\leq\int_{\mathbb{R}^N\backslash B(0,R)}\left(\int_{\mathbb{R}^N\backslash B(0,R)}(1+|x|)^q\rho_0(x)dx\right)
(1+|y|)^q\rho_0(y)dy
\leq \varepsilon^2
\end{eqnarray*}
because of $E(\rho_0)<+\infty$.
In conjunction with the above two inequalities and (\ref{eq f2}) we obtain
\begin{eqnarray*}
&&\int_{\mathbb{R}^N} \int_{\mathbb{R}^N}|x-y|^q\rho_0(x)\rho_0(y)dxdy
\\&&\leq\int_{B(0,R)}\int_{B(0,R)}|x-y|^q\rho_0(x)\rho_0(y)dxdy+2(1+R)^qm\varepsilon+\varepsilon^2
\\&&\leq\liminf_{n\rightarrow\infty}\int_{B(0,R)}\int_{B(0,R)}|x-y|^q\rho_n(x)\rho_n(y)dxdy+2(1+R)^qm\varepsilon
+\varepsilon^2
\\&&\leq\liminf_{n\rightarrow\infty}\int_{\mathbb{R}^N}\int_{\mathbb{R}^N}|x-y|^q\rho_n(x)\rho_n(y)dxdy+2(1+R)^q
m\varepsilon+\varepsilon^2.
\end{eqnarray*}
Taking $\varepsilon\rightarrow0$, we obtain (\ref{eq 8}), which completes the proof.
\end{proof}
We can prove Theorem \ref{th1}  in the case of  $-N<p<0<q$.
\begin{prop}\label{prop1}
Assume that  $-N<p<0<q$.
Then for any $M,\, m>0$, the nonlocal interaction functional $E[\cdot]$  has
at least a minimizer in $\mathcal{D}_{M,m}$.
\end {prop}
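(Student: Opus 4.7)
The plan is to apply the direct method of the calculus of variations, combining the concentration--compactness machinery of Lemma~\ref{lemma a4} and Lemma~\ref{lemma a3} with the lower-semicontinuity estimate of Lemma~\ref{lemma a5}. A key observation is that in the regime $-N<p<0<q$ with $F\ge 0$, every term of $E[\rho]$ is pointwise nonnegative (the repulsive contribution $-\tfrac{1}{2p}\int_{\mathbb{R}^N}\int_{\mathbb{R}^N}|x-y|^p\rho(x)\rho(y)\,dx\,dy$ carries the positive factor $-1/p$), so $c_0:=\inf_{\mathcal{D}_{M,m}}E[\cdot]\in[0,+\infty)$; finiteness of $c_0$ is obtained by testing against $\rho=\tfrac{m}{\omega_N r^N}\chi_{B(0,r)}$ with $r$ large enough that $\rho\le M$, using local integrability of $|x|^p$ ($p>-N$) and continuity of $F$.

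Now fix a minimizing sequence $\{\rho_n\}\subset\mathcal{D}_{M,m}$. Lemma~\ref{lemma a4} extracts a subsequence $\{\rho_{n_k}\}$ together with a bounded sequence $\{y_{n_k}\}\subset\mathbb{R}^N$ satisfying the tightness statement \eqref{eq abh1}, and Lemma~\ref{lemma a3} then produces a candidate limit $\rho_0\in\mathcal{D}_{M,m}$ with $\rho_{n_k}\rightharpoonup\rho_0$ in some $L^s(\mathbb{R}^N)$, $s\in(1,+\infty)$.

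It remains to verify $E[\rho_0]\le\liminf_k E[\rho_{n_k}]=c_0$, which I handle term by term. For the repulsive piece, Lemma~\ref{lemma 2} with $\gamma=p\in(-N,0)$ gives full convergence of $\int_{\mathbb{R}^N}\int_{\mathbb{R}^N}|x-y|^p\rho_{n_k}(x)\rho_{n_k}(y)\,dx\,dy$ to its $\rho_0$--counterpart. For the attractive piece, Lemma~\ref{lemma a5} supplies the desired liminf inequality since $q>0$. For the exogenous piece I exploit the tightness: let $A$ bound $\{|y_{n_k}|\}$ and set $\bar R=R+A$ with $R$ coming from \eqref{eq abh1} at scale $\varepsilon>0$. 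Because $F$ is continuous, $F\chi_{B(0,\bar R)}$ is bounded and hence lies in $L^{s/(s-1)}(\mathbb{R}^N)$, so weak convergence yields $\int_{B(0,\bar R)}F\rho_{n_k}\,dx\to\int_{B(0,\bar R)}F\rho_0\,dx$; combining this with $F\rho_{n_k}\ge 0$ on the complement and then letting $\bar R\to+\infty$ (monotone convergence on the $\rho_0$ side) produces $\int_{\mathbb{R}^N}F\rho_0\,dx\le\liminf_k\int_{\mathbb{R}^N}F\rho_{n_k}\,dx$. Summing the three estimates gives $E[\rho_0]\le c_0$, and since $\rho_0\in\mathcal{D}_{M,m}$ this equality forces $\rho_0$ to be a minimizer.

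The decisive obstacle is the \emph{broken translation invariance} caused by $F$, which disables the classical tightness-by-translation trick standard in the $F\equiv 0$ case. It is absorbed precisely by Lemma~\ref{lemma a4}, which uses $F(x)\to+\infty$ at infinity to prevent mass escape and to keep the concentration centres $\{y_{n_k}\}$ \emph{bounded}. Once this localisation is in hand, the growth of $F$ at infinity causes no further difficulty, because the weak-convergence step for the exogenous term only requires $F$ on the compact set $B(0,\bar R)$, where continuity makes it bounded; similarly, the (singular but integrable) repulsive kernel is controlled by Lemma~\ref{lemma 2} and the positive-exponent attractive kernel is handled by the tail argument already used in the proof of Lemma~\ref{lemma a5}.
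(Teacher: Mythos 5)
Your proposal is correct and follows essentially the same route as the paper's own proof: Lemma~\ref{lemma a4} plus Lemma~\ref{lemma a3} to extract a weakly convergent subsequence with limit in $\mathcal{D}_{M,m}$, then term-by-term lower semicontinuity via Lemma~\ref{lemma 2} (repulsive part, $\gamma=p$), Lemma~\ref{lemma a5} (attractive part), and weak convergence of $\int_{B(0,L)}F\rho_{n_k}\,dx$ on balls combined with $F\ge 0$ and letting $L\to\infty$ for the exogenous part. Your preliminary verification that $c_0\in[0,+\infty)$ by testing against a normalized characteristic function is a harmless addition the paper leaves implicit, and your remark on $\bar R=R+A$ is not actually needed for the exogenous liminf inequality, which works on arbitrary balls exactly as in the paper.
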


\begin{proof}
Let $\{\rho_n\}\subset \mathcal{D}_{M,m}$ be a minimizing sequence for (\ref{eq 4}).
By Lemma \ref{lemma a4} and Lemma \ref{lemma a3}, there exists a
subsequence $\{\rho_{n_k}\}\subset \mathcal{D}_{M,m}$
and $\rho_0\in \mathcal{D}_{M,m}$ such that
\begin{eqnarray*}
\rho_{n_k}\rightharpoonup \rho_0 {\rm\ \ in}\ \  L^s(\mathbb{R}^N), \ {\rm\ \ as}\ k\rightarrow \infty
\end {eqnarray*}
for some $s\in(1,+\infty)$.
Clearly we have that
\begin{eqnarray}\label{eq ag1}
E[\rho_0]\geq\inf \{E[\rho]:\rho\in \mathcal{D}_{M,m}\}=\lim_{k\rightarrow\infty}E[\rho_{n_k}].
\end{eqnarray}
In order to prove that $\rho_0$ is a minimizer in $\mathcal{D}_{M,m}$,
we must show that the functional $E[\rho]$ is weak lower semi-continuous.
Similarly to (\ref{eq i1}), we have
$$\int_{B(0,L)}\rho_{n_k}(x)F(x)dx\rightarrow\int_{B(0,L)}\rho_0(x)F(x)dx,\ \ {\rm as} \ k\rightarrow\infty.$$
Thus, for any $\varepsilon>0$, there exists $k_0>0$, such that for $k>k_0$, we have
$$\int_{B(0,L)}F(x)\rho_0(x)dx\leq\int_{B(0,L)}\rho_{n_k}(x)F(x)dx+\varepsilon\leq\int_{\mathbb{R}^N}
\rho_{n_k}(x)F(x)dx+\varepsilon.$$
Since $\int_{B(0,L)}F(x)\rho_0(x)dx$ is an increasing function of $L$, it follows that
$$\int_{\mathbb{R}^N}F(x)\rho_0(x)dx\leq\liminf_{L\rightarrow\infty}\int_{B(0,L)}F(x)\rho_0(x)dx\leq
\int_{\mathbb{R}^N}F(x)\rho_{n_k}(x)dx+\varepsilon.$$
Passing to the limit as $k\rightarrow+\infty$, we obtain
\begin{eqnarray}\label{eq 9}
\int_{\mathbb{R}^N}F(x)\rho_0(x)dx\leq\liminf_{k\rightarrow\infty}\int_{\mathbb{R}^N}F(x)\rho_{n_k}(x)dx+\varepsilon.
\end {eqnarray}
Invoking (\ref{eq g4}) along with (\ref{eq 8}), (\ref{eq 9}) and letting $\varepsilon\rightarrow0$ we deduce that
\begin{eqnarray}\label{eq g3}
E[\rho_0]\leq\liminf_{k\rightarrow\infty}E[\rho_{n_k}].
\end {eqnarray}
From (\ref{eq ag1}) and (\ref{eq g3}) we can get
\begin{eqnarray*}\label{eq g6}
E[\rho_0]=\inf \{E[\rho]:\rho\in \mathcal{D}_{M,m}\},
\end{eqnarray*}
which entails that $\rho_0$ is a minimizer for  (\ref{eq 4}) in $\mathcal{D}_{M,m}$ when $-N<p<0<q$.
\end {proof}

\vskip 3mm
Next considering the case: $-N<p<q<0$,  we  need the
following version of concentration compactness principle.
\begin{lem}\label{lemma b1}
Assume that $-N<p<q<0$.
Let $\{\rho_n\}\subset \mathcal{D}_{M,m}$ be a minimizing sequence for (\ref{eq 4}).
Then there exists a subsequence $\{\rho_{n_k}\}$ satisfying:
there exists a bounded sequence $\{y_{n_k}\} \subset \mathbb{R}^N$
such that for all $\varepsilon>0$, there is $R>0$ with the property that
\begin{eqnarray}\label{eq abh1}
\int_{B(y_{n_k},R)}\rho_{n_k}(x)dx\geq m-\varepsilon\ \ \ for\ all\ k.
\end{eqnarray}
\end {lem}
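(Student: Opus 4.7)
The plan is to mimic the proof of Lemma \ref{lemma a4} almost verbatim, with one adjustment to handle the fact that the interaction energy is no longer automatically nonnegative when $p<q<0$. In the earlier proof, the crucial step was the estimate $E[\rho_{n_k}]\ge\int F(x)\rho_{n_k}(x)\,dx$, which used $K\ge 0$ (true when $-N<p<0<q$). Here $K$ changes sign, so I first need a uniform lower bound on the interaction part.

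The first thing I would do is analyze the pointwise behavior of $K(x)=\frac{1}{q}|x|^q-\frac{1}{p}|x|^p$ as a radial function of $r=|x|$. A direct differentiation shows $K'(r)=r^{q-1}-r^{p-1}$, which vanishes only at $r=1$ and is negative for $r<1$ and positive for $r>1$. Thus $K$ attains its global minimum at $r=1$ with
\[
K_{\min}=K(1)=\tfrac{1}{q}-\tfrac{1}{p}=\tfrac{p-q}{pq},
\]
a finite negative constant. Consequently, for every $\rho\in\mathcal{D}_{M,m}$,
\[
\tfrac{1}{2}\!\iint K(x-y)\rho(x)\rho(y)\,dx\,dy\ge\tfrac{1}{2}K_{\min}\,m^{2},
\]
so $E[\rho]\ge\tfrac{1}{2}K_{\min}m^{2}+\int F(x)\rho(x)\,dx$. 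In particular $c_{0}:=\inf_{\mathcal{D}_{M,m}}E\ge\tfrac{1}{2}K_{\min}m^{2}$ is finite, and this plays the role of the nonnegativity of $E$ that was used before.

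With this lower bound in hand, I would repeat the concentration compactness argument of Lemma \ref{lemma a4}. Set $Q_{n}(R)=\sup_{y}\int_{B(y,R)}\rho_{n}\,dx$, pass to a subsequence so $Q_{n_{k}}(R)\to Q(R)$, and let $\alpha=\lim_{R\to\infty}Q(R)\in[0,m]$. Assuming for contradiction $\alpha<m$, one gets $\int_{\mathbb{R}^{N}\setminus B(0,R)}\rho_{n_{k}}\,dx\ge(m-\alpha)/2$ for $k,R$ large. Using $F(x)\to+\infty$, I pick $R$ so large that
\[
F(x)\ge\frac{2\bigl(c_{0}+1-\tfrac{1}{2}K_{\min}m^{2}\bigr)}{m-\alpha}\quad\text{for all }x\in\mathbb{R}^{N}\setminus B(0,R),
\]
which yields $E[\rho_{n_{k}}]\ge\tfrac{1}{2}K_{\min}m^{2}+c_{0}+1-\tfrac{1}{2}K_{\min}m^{2}=c_{0}+1$, contradicting minimality. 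Hence $\alpha=m$.

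The derivation of the bounded translates $\{y_{n_{k}}\}$ satisfying \eqref{eq abh1} is then identical to the end of Lemma \ref{lemma a4}: pick $R_{0}$ with $Q(R_{0})>m/2$, choose $y_{n_{k}}$ nearly attaining $Q_{n_{k}}(R_{0})$, pick $x_{n_{k}}$ nearly attaining $Q_{n_{k}}(R)$, use mass conservation to force $B(x_{n_{k}},R)\cap B(y_{n_{k}},R_{0})\ne\varnothing$, and conclude that $\int_{B(y_{n_{k}},2R+R_{0})}\rho_{n_{k}}\,dx\ge m-\varepsilon$. Boundedness of $\{y_{n_{k}}\}$ follows by the same coercivity-of-$F$ argument applied with the adjusted threshold $\tfrac{c_{0}+1-\tfrac12K_{\min}m^2}{m-\varepsilon}$. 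The only real obstacle, and the only new ingredient, is the uniform lower bound $K\ge K(1)$; once that is in place the rest of the argument is a cosmetic rewrite of Lemma \ref{lemma a4}.
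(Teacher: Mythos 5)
Your proof is correct, and it reaches the conclusion by a genuinely different key estimate than the paper's. The paper does not use a pointwise bound on $K$ in this regime: it keeps the nonnegative piece $-\frac{1}{2p}\iint |x-y|^p\rho(x)\rho(y)\,dx\,dy\geq 0$ (recall $-\frac1p>0$ for $p<0$) and controls the negative piece $\frac{1}{2q}\iint |x-y|^q\rho(x)\rho(y)\,dx\,dy$ via the sharp Hardy--Littlewood--Sobolev inequality (Proposition \ref{proposition 2.3}) combined with interpolation of $L^{2N/(2N+q)}$ between $L^1$ and $L^\infty$ in $\mathcal{D}_{M,m}$, which gives the uniform lower bound $\frac{1}{2q}C(q)M^{-q/N}m^{(2N+q)/N}$ on the interaction energy; the subsequent contradiction arguments (for $\alpha<m$ and for unboundedness of $\{y_{n_k}\}$) are then structurally identical to yours, with this $M$-dependent constant playing the role of your $\frac12 K_{\min}m^2$. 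Your alternative rests on the elementary observation that $K'(r)=r^{p-1}(r^{q-p}-1)$ changes sign only at $r=1$, so $K\geq K(1)=\frac1q-\frac1p$ pointwise, whence $\frac12\iint K(x-y)\rho(x)\rho(y)\,dx\,dy\geq \frac12\bigl(\frac1q-\frac1p\bigr)m^2$; this is valid throughout $-N<p<q<0$, and the positivity of your coercivity threshold is guaranteed by $c_0\geq\frac12 K_{\min}m^2$. Your route is arguably cleaner: it avoids HLS entirely, yields a constant independent of the $L^\infty$ cap $M$, and is in fact the very device the paper itself deploys later in Lemma \ref{lemma 4} for the case $0<p<q$ --- so your argument shows the regimes $-N<p<q<0$ and $0<p<q$ admit a unified treatment via $K\geq\frac1q-\frac1p$, whereas the paper's HLS bound buys nothing essential here beyond an explicit sharp constant. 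Both lower bounds equally establish finiteness of $c_0$ and make the tail and translation arguments of Lemma \ref{lemma a4} go through verbatim.
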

\begin{proof}
The proof is similar to that of  Lemma \ref{lemma a4}.
Noting that $0\leq \alpha\leq m$, where $\alpha$ is given in (\ref{eq ab2}).
In the following, we shall prove $\alpha=m$.
If $0\leq\alpha<m$, (\ref{eq g1}) holds.
In view of Proposition \ref{proposition 2.3}, for $\rho\in \mathcal{D}_{M,m}$ we have
\begin{eqnarray*}
&&\int_{\mathbb{R}^N} \int_{\mathbb{R}^N}|x-y|^q\rho(x)\rho(y)dxdy
\\&&\leq C(q)\|\rho\|^2_{L^{\frac{2N}{2N+q}}(\mathbb{R}^N)}
\leq C(q)\left(\|\rho\|_{L^\infty(\mathbb{R}^N)}^{\frac{2N}{2N+q}-1}\int_{\mathbb{R}^N}\rho(x)dx\right)^{2\cdot\frac{2N+q}{2N}}
\leq  C(q)M^\frac{-q}{N}m^\frac{2N+q}{N}.
\end{eqnarray*}
Since $F(x)$ is a continuous function and satisfies $F(x)\rightarrow+\infty$ as $|x|\rightarrow+\infty$,
for sufficiently large $R$, we have $F(x)\geq\frac{2c_0+2-\frac{1}{q}C(q)M^\frac{-q}{N}m^\frac{2N+q}{N}}
{m-\alpha}>0$ for all $x\in \mathbb{R}^N\backslash B(0,R)$, where
\begin{equation*}
 c_0=\inf \{E(\rho):\rho\in \mathcal{D}_{M,m}\}\geq\frac{1}{2q}C(q)M^\frac{-q}{N}m^\frac{2N+q}{N}.
\end{equation*}
Moreover, from (\ref{eq g1}) we see that
\begin{eqnarray*}
E[\rho_{n_k}]&\geq &\frac{1}{2q}\int_{\mathbb{R}^N} \int_{\mathbb{R}^N}|x-y|^q\rho_{n_k}(x)\rho_{n_k}(y)dxdy+\int_{\mathbb{R}^N}F(x)\rho_{n_k}(x)dx\nonumber
\\&\geq &\frac{1}{2q}C(q)M^\frac{-q}{N}m^\frac{2N+q}{N}+\frac{2c_0+2-\frac{1}{q}C(q)M^\frac{-q}{N}m^\frac{2N+q}{N}}
{m-\alpha}\int_{\mathbb{R}^N\backslash B(0,R)}\rho_{n_k}(x)dx\nonumber
\geq c_0+1,
\end{eqnarray*}
which contradicts with the fact that $\{\rho_{n_k}\}$ is a minimizing sequence.
Consequently, $\alpha= m$.
Thus, (\ref{eq abh1}) holds 
and  we claim that $\{y_{n_k}\}$ is bounded. Otherwise, assuming that
$\{y_{n_k}\}$ is unbounded,
we may take a subsequence of $\{y_{n_k}\}$, still denoted by
$\{y_{n_k}\}$, such that
$|y_{n_k}|\rightarrow+\infty$  ${\rm as}\ k\rightarrow+\infty$.
Then for large enough $k$, since $|x|\rightarrow+\infty$ for all $x\in B(y_{n_k},R)$,
so $F(x)\geq\frac{c_0+1-\frac{1}{2q}C(q)M^\frac{-q}{N}m^\frac{2N+q}{N}}
{m-\varepsilon}>0$ ($\varepsilon<m$) for all $x\in B(y_{n_k},R)$.
Therefore, this implies that
\begin{eqnarray*}
E[\rho_{n_k}]&\geq &\frac{1}{2q}\int_{\mathbb{R}^N} \int_{\mathbb{R}^N}|x-y|^q\rho_{n_k}(x)\rho_{n_k}(y)dxdy
+ \int_{B(y_k,R)}F(x)\rho_{n_k}(x)dx\nonumber
\\&\geq &\frac{1}{2q}C(q)M^\frac{-q}{N}m^\frac{2N+q}{N}+\frac{c_0+1-\frac{1}{2q}C(q)M^\frac{-q}{N}m^\frac{2N+q}{N}}
{m-\varepsilon}\int_{B(y_k,R)}\rho_{n_k}(x)dx
\\&\geq & c_0+1
\end{eqnarray*}
because of (\ref{eq abh1}),
which contradicts with the fact that $\{\rho_{n_k}\}$ is the minimizing sequence. Thus,
we achieve that $\{y_{n_k}\}$ is bounded. This finishes the proof of the lemma.
\end {proof}

Obviously, a direct application of Lemma \ref{lemma b1}
enables us to get  Lemma \ref{lemma a3}
and  establish the compactness of energy-minimizing sequences.

We can prove Theorem \ref{th1}  in the case of $-N<p<q<0$.
\begin{prop}\label{prop2}
Assume that  $-N<p<q<0$.
Then for any $M,\, m>0$, the nonlocal interaction functional $E[\cdot]$  has
at least a minimizer in $\mathcal{D}_{M,m}$.
\end {prop}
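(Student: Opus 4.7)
The plan is to mirror the proof of Proposition \ref{prop1}, exploiting the crucial simplification that both exponents $p$ and $q$ now lie in $(-N,0)$. I would start with a minimizing sequence $\{\rho_n\}\subset\mathcal{D}_{M,m}$ for $E[\cdot]$. Lemma \ref{lemma b1} applies directly in this parameter regime and supplies a subsequence $\{\rho_{n_k}\}$ together with a bounded sequence $\{y_{n_k}\}\subset\mathbb{R}^N$ so that mass does not escape to infinity. Feeding this into Lemma \ref{lemma a3} then produces a further subsequence, still labeled $\{\rho_{n_k}\}$, and a candidate $\rho_0\in\mathcal{D}_{M,m}$ with $\rho_{n_k}\rightharpoonup\rho_0$ in $L^s(\mathbb{R}^N)$ for some $s\in(1,\infty)$. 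In particular $E[\rho_0]\geq\inf_{\mathcal{D}_{M,m}}E=\lim_k E[\rho_{n_k}]$ by definition.

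It remains to establish weak lower semi-continuity, i.e.\ $E[\rho_0]\leq\liminf_{k\to\infty}E[\rho_{n_k}]$. The key advantage over Proposition \ref{prop1} is that now \emph{both} pieces of the kernel $K$ have exponents in $(-N,0)$, so Lemma \ref{lemma 2} applies to each of them. This gives, simultaneously,
\begin{equation*}
\lim_{k\to\infty}\int_{\mathbb{R}^N}\int_{\mathbb{R}^N}|x-y|^{p}\rho_{n_k}(x)\rho_{n_k}(y)\,dxdy=\int_{\mathbb{R}^N}\int_{\mathbb{R}^N}|x-y|^{p}\rho_0(x)\rho_0(y)\,dxdy,
\end{equation*}
together with the same identity with $p$ replaced by $q$. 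Consequently the entire nonlocal interaction part of $E$ passes to the limit as a genuine equality, so no analogue of Lemma \ref{lemma a5} is needed here: the only functional whose lower semi-continuity I must still exploit is the external one.

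For the $F$-term I would simply reuse the argument from Proposition \ref{prop1}, which is based on the localized convergence (\ref{eq i1}) applied on balls $B(0,L)$, monotonicity of $\int_{B(0,L)}F\rho_0\,dx$ in $L$, and the arbitrariness of $\varepsilon$; this yields exactly the bound
\begin{equation*}
\int_{\mathbb{R}^N}F(x)\rho_0(x)\,dx\leq\liminf_{k\to\infty}\int_{\mathbb{R}^N}F(x)\rho_{n_k}(x)\,dx,
\end{equation*}
as in (\ref{eq 9}). Adding this to the convergence of the interaction term gives $E[\rho_0]\leq\liminf_{k\to\infty}E[\rho_{n_k}]$, and combined with the reverse inequality above one concludes $E[\rho_0]=\inf_{\mathcal{D}_{M,m}}E$, so $\rho_0$ is a minimizer.

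I do not expect any substantive obstacle: the only nontrivial input specific to the regime $-N<p<q<0$ has already been absorbed into Lemma \ref{lemma b1}, whose proof used the Hardy--Littlewood--Sobolev bound from Proposition \ref{proposition 2.3} to control the (now negative) attractive part uniformly in $\mathcal{D}_{M,m}$, after which the coercivity of $F$ at infinity did the rest. The subsequent semi-continuity step is actually easier than in Proposition \ref{prop1} because the whole kernel is covered by Lemma \ref{lemma 2}; the main care required is only notational, namely keeping track that $\rho_0$ lies in $\mathcal{D}_{M,m}$ (already guaranteed by Lemma \ref{lemma a3}) so that the limiting functional value is attained in the admissible class.
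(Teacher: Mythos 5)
Your proof is correct and follows essentially the same route as the paper: Lemma \ref{lemma b1} plus Lemma \ref{lemma a3} produce the weak $L^s$ limit $\rho_0\in\mathcal{D}_{M,m}$, Lemma \ref{lemma 2} gives convergence (indeed equality in the limit) of the interaction term since both exponents lie in $(-N,0)$, and the lower semicontinuity bound (\ref{eq 9}) for the $F$-term closes the argument. The only cosmetic difference is that you apply Lemma \ref{lemma 2} separately to the $|x-y|^p$ and $|x-y|^q$ parts, whereas the paper applies it once to the combined kernel $\frac{1}{q}|x-y|^q-\frac{1}{p}|x-y|^p$ in (\ref{eq abg4}); this is the same computation.
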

\begin{proof}
Let $\{\rho_n\}\subset \mathcal{D}_{M,m}$ be a minimizing sequence for (\ref{eq 4}).
By Lemma \ref{lemma b1} and Lemma \ref{lemma a3}, there exists a
subsequence $\{\rho_{n_k}\}\subset \mathcal{D}_{M,m}$
and $\rho_0\in \mathcal{D}_{M,m}$ such that
\begin{eqnarray*}
\rho_{n_k}\rightharpoonup \rho_0 {\rm\ \ in}\ \  L^s(\mathbb{R}^N), \ {\rm\ \ as}\ k\rightarrow \infty
\end{eqnarray*}
for some $s\in(1,+\infty)$.
Clearly we have that
\begin{eqnarray}\label{eq ab2g1}
E[\rho_0]\geq\inf \{E[\rho]:\rho\in \mathcal{D}_{M,m}\}=\lim_{k\rightarrow\infty}E[\rho_{n_k}].
\end{eqnarray}
In order to prove that $\rho_0$ is a minimizer in $\mathcal{D}_{M,m}$,
we must show that the functional $E[\rho]$ is weak lower semi-continuous.
From (\ref{eq g4}) in Lemma \ref{lemma 2}, we obtain
\begin{eqnarray}
&&\lim_{n\rightarrow\infty}\int_{\mathbb{R}^N}\int_{\mathbb{R}^N}(\frac{1}{q}|x-y|^q-\frac{1}{p}|x-y|^p)
\rho_n(x)\rho_n(y)dxdy\nonumber
\\&&=\int_{\mathbb{R}^N}\int_{\mathbb{R}^N}(\frac{1}{q}|x-y|^q-\frac{1}{p}|x-y|^p)\rho(x)\rho(y)dxdy.\label{eq abg4}
\end{eqnarray}
Combining (\ref{eq abg4}) and (\ref{eq 9}), and letting $\varepsilon\rightarrow0$ we know that
\begin{eqnarray}\label{eq hg3}
E[\rho_0]\leq\liminf_{k\rightarrow\infty}E[\rho_{n_k}].
\end{eqnarray}
In view of (\ref{eq ab2g1}) and (\ref{eq hg3}) we conclude that
\begin{eqnarray*}\label{eq hg6}
E[\rho_0]=\inf \{E[\rho]:\rho\in \mathcal{D}_{M,m}\},
\end{eqnarray*}
which means that $\rho_0$ is a minimizer for  (\ref{eq 4}) in $\mathcal{D}_{M,m}$ when $-N<p<q<0$.
\end {proof}

Now we can prove Theorem \ref{th1}.

\emph{Proof of Theorem \ref{th1}.}
With Proposition \ref{prop1} and Proposition \ref{prop2} at hand,
we can easily assert there  exists at least a minimizer for  (\ref{eq 4})
in $\mathcal{D}_{M,m}$ when $-N<p<0$ and $q>p$.\hfill$\Box$
\section{Proof of Theorem \ref{th3}}
In this section, we prove Theorem \ref{th3}.
Before going into details, let us first give the following lemma, which will play a key role in
the derivation of compactness of
the energy-minimizing sequence in probability measures.

\begin{lem}\label{lemma 4}
Assume that $0<p<q$.
Let $\{\mu_n\}\subset\mathcal{P}(\mathbb{R}^N)$
be a minimizing sequence for (\ref{eq 6}).
Then there is a subsequence $\{\mu_{n_k}\}$ satisfying:
there exists a bounded sequence $\{y_{n_k}\} \subset \mathbb{R}^N$
such that for all $\varepsilon>0$, there is $R>0$ with the property that
\begin{eqnarray}\label{eq abhi1}
\int_{B(y_{n_k},R)}\mu_{n_k}(x)dx\geq 1-\varepsilon\ \ \ for\ all\ k.
\end{eqnarray}
\end{lem}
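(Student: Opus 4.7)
The plan is to adapt the concentration compactness argument from Lemma \ref{lemma a4} to probability measures, with the key modification being how we bound the endogenous interaction energy from below. Set $Q_n(R) := \sup_{y \in \mathbb{R}^N} \mu_n(B(y,R))$, which is a nondecreasing sequence of functions on $[0,\infty)$ valued in $[0,1]$ with $\lim_{R \to \infty} Q_n(R) = 1$. By a Helly-type diagonal selection, extract a subsequence $\{Q_{n_k}\}$ converging pointwise on $(0,\infty)$ to a nondecreasing function $Q$, and set $\alpha := \lim_{R \to \infty} Q(R) \in [0,1]$. The goal is to show $\alpha = 1$; once this is established, the conclusion \eqref{eq abhi1} together with boundedness of $\{y_{n_k}\}$ will follow by the same two-ball overlap argument used in Lemma \ref{lemma a4}.

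To rule out $\alpha < 1$, the crucial new ingredient is a uniform lower bound on the interaction term. Since $0 < p < q$, the kernel $K(x) = \frac{1}{q}|x|^q - \frac{1}{p}|x|^p$ is continuous on $\mathbb{R}^N$ with $K(0)=0$ and $K(x) \to +\infty$ as $|x| \to \infty$, so $K_{\min} := \inf_{\mathbb{R}^N} K$ is finite. For any $\mu \in \mathcal{P}(\mathbb{R}^N)$, this yields
\begin{equation*}
\frac{1}{2}\int_{\mathbb{R}^N}\int_{\mathbb{R}^N} K(x-y)\, d\mu(x)\, d\mu(y) \geq \frac{K_{\min}}{2}.
\end{equation*}
This plays the role that nonnegativity of the interaction energy played in Lemma \ref{lemma a4} and that the HLS bound played in Lemma \ref{lemma b1}. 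The argument then follows Lemma \ref{lemma a4}: $\alpha < 1$ would force $\int_{\mathbb{R}^N \setminus B(0,R)} d\mu_{n_k} \geq (1-\alpha)/2$ for all large $k$ and $R$; choosing $R$ large enough that $F(x) \geq \frac{2(c_0 + 1) - K_{\min}}{1-\alpha}$ on $\mathbb{R}^N \setminus B(0,R)$, where $c_0 := \inf_{\mathcal{P}(\mathbb{R}^N)} E[\mu]$, would give $E[\mu_{n_k}] \geq K_{\min}/2 + (c_0+1) - K_{\min}/2 = c_0 + 1$, contradicting the minimizing property.

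With $\alpha = 1$ in hand, the construction of the centres $\{y_{n_k}\}$ satisfying \eqref{eq abhi1} proceeds identically to the corresponding step in Lemma \ref{lemma a4}: choose $R_0$ with $Q(R_0) > 1/2$, pick $y_{n_k}$ with $\mu_{n_k}(B(y_{n_k}, R_0)) \geq Q_{n_k}(R_0) - 1/k$; then for each $\varepsilon \in (0,1/2)$ choose $R$ with $Q(R) > 1-\varepsilon$ and $x_{n_k}$ approximating the supremum at scale $R$. For large $k$ the two masses sum to more than $1$, forcing $B(x_{n_k}, R) \cap B(y_{n_k}, R_0) \neq \varnothing$ and hence $B(x_{n_k}, R) \subset B(y_{n_k}, 2R + R_0)$, which after slightly enlarging $R$ gives \eqref{eq abhi1}. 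Boundedness of $\{y_{n_k}\}$ is then shown by contradiction: if $|y_{n_k}| \to \infty$ along a subsequence, then on $B(y_{n_k}, R)$ we have $F(x) \geq \frac{c_0 + 1 - K_{\min}/2}{1 - \varepsilon}$ eventually, and the same computation gives $E[\mu_{n_k}] \geq c_0 + 1$.

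The main obstacle is precisely the step of finding a uniform lower bound on the endogenous interaction term in the regime $0 < p < q$, where $K$ is neither nonnegative (as when $p < 0 < q$) nor amenable to a singular-kernel HLS estimate (as when $p, q < 0$). The observation that $K$ is continuous and coercive on $\mathbb{R}^N$ therefore attains a finite minimum $K_{\min}$ is simple but essential; once it is in place, the remainder of the proof mirrors Lemma \ref{lemma a4} almost verbatim, with $m$ replaced by $1$ throughout.
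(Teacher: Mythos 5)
Your proposal is correct and follows essentially the same route as the paper: the same concentration function $Q_n$, the same contradiction argument forcing $\alpha=1$ via a uniform lower bound on the interaction term plus the growth of $F$, and the same two-ball overlap and boundedness arguments. The only cosmetic difference is that you invoke continuity and coercivity of $K$ to get a finite $K_{\min}$ abstractly, while the paper computes it explicitly as $K_{\min}=\frac{1}{q}-\frac{1}{p}$ (attained at $|x|=1$), which is exactly your constant.
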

\begin{proof}
The proof is based on \cite[Section 4.3]{Struwe}.
Denote
$$Q_{n_k}(R)=\sup_{y\in\mathbb{R}^N}\int_{B(y,R)}d\mu_{n_k}(x).$$
Noticing that $\{Q_n\}$ is a sequence of nondecreasing, nonnegative bounded
functions on $[0,+\infty)$ with $\lim _{R\rightarrow\infty}Q_n(R)=1$,
there exists a subsequence $\{Q_{n_k}\}$ and a nondecreasing nonnegative
function $Q$ such that $Q_{n_k}(R)\rightarrow Q(R)$ as $k\rightarrow\infty$, for $R>0$.
Let
$$\alpha=\lim _{R\rightarrow\infty}Q(R).$$
Clearly $0\leq \alpha\leq 1$.
Similarly to (\ref{eq g1}), if $\alpha<1$ we have
\begin{equation}\label{eq abh4}
\int_{\mathbb{R}^N\backslash B(0,R)}d\mu_{n_k}(x)\geq \frac{1-\alpha}{2}\ \ {\rm for}  \ k,R \ {\rm large \ enough}.
\end{equation}
Noting that $q>p>0$, we deduce
$K(x)\geq\frac{1}{q}-\frac{1}{p}$.
For sufficiently large $R$, we have $F(x)\geq\frac{2c_0+2-\frac{1}{q}+\frac{1}{p}}{1-\alpha}>0$ for all
$x\in \mathbb{R}^N\backslash B(0,R)$, where
\begin{equation*}
 c_0=\inf \{E[\mu]:\mu\in \mathcal{P}(\mathbb{R}^N)\}\geq\frac{1}{q}-\frac{1}{p}.
\end{equation*}
Using (\ref{eq abh4}) we obtain
\begin{eqnarray*}
E[\mu_{n_k}]&\geq &\frac{1}{2}(\frac{1}{q}-\frac{1}{p})+\int_{\mathbb{R}^N\backslash B(0,R)}F(x)d\mu_{n_k}(x)\nonumber
\\&\geq &\frac{1}{2}(\frac{1}{q}-\frac{1}{p})+\frac{2c_0+2-\frac{1}{q}+\frac{1}{p}}{1-\alpha}
\cdot\frac{1-\alpha}{2}\geq c_0+1,
\end{eqnarray*}
which contradicts with the fact that $\{\mu_{n_k}\}$ is a minimizing sequence.
Thus, $\alpha=1$. Similarly to the proof of Lemma \ref{lemma a4},
it follows that (\ref{eq abhi1}) holds.
Using the fact that $F(x)\geq\frac{c_0+1-\frac{1}{2}(\frac{1}{q}-\frac{1}{p})}
{1-\varepsilon}>0$ ($\varepsilon<1$)
for all $x\in B(y_{n_k},R)$ and (\ref{eq abhi1}), we obtain that $\{y_{n_k}\}$ is bounded.
Therefore the proof is complete.
\end{proof}
The following lemma plays a key role to derive weak lower-semicontinuity of
the functional $E(\mu)$.
\begin{lem}\label{lemma 5} \cite[Lemma 2.2]{SST}
Assume that the function $K:[0,\infty]\rightarrow (-\infty,\infty]$ is  lower-semicontinuous
and bounded from below. Then the energy $E:\mathcal{P}(\mathbb{R}^N)\rightarrow (-\infty,\infty]$ defined in
(\ref{eq 6}) with $F\equiv0$ is weakly lower-semicontinuous with respect to weak convergence of measures.
\end{lem}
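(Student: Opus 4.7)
The plan is to establish lower-semicontinuity by approximating the possibly singular lower-semicontinuous kernel $K$ from below by bounded continuous kernels, for which the associated energy is weakly continuous, and then pass to the limit.

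First, since $K$ is bounded from below by some constant $-C$, I would replace $K$ by $K+C\ge 0$. For any $\mu\in\mathcal{P}(\mathbb{R}^N)$ this merely shifts the energy by the constant $C/2$, so lower-semicontinuity is unaffected and one may assume $K\ge 0$. Next, because $K:[0,\infty]\to[0,\infty]$ is lower-semicontinuous on a metric space, there exists a pointwise increasing sequence of bounded continuous functions $K_j:[0,\infty]\to[0,\infty)$ with $K_j\nearrow K$; an explicit construction is the Moreau-type inf-convolution $K_j(r):=\min\bigl\{j,\,\inf_{s\in[0,\infty]}(K(s)+j\,d(r,s))\bigr\}$, which is $j$-Lipschitz, bounded by $j$, and nondecreasing in $j$ with supremum $K$. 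For each $j$, the map $(x,y)\mapsto K_j(|x-y|)$ is bounded and continuous on $\mathbb{R}^N\times\mathbb{R}^N$.

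The heart of the argument is the product-measure fact: if $\mu_n\rightharpoonup\mu$ narrowly in $\mathcal{P}(\mathbb{R}^N)$, then $\mu_n\otimes\mu_n\rightharpoonup\mu\otimes\mu$ narrowly in $\mathcal{P}(\mathbb{R}^N\times\mathbb{R}^N)$. For tensor products $\psi(x)\chi(y)$ with $\psi,\chi\in C_b(\mathbb{R}^N)$ this is immediate by factorization, and the general case follows from Prokhorov's theorem: $\{\mu_n\}$ is tight, hence so is $\{\mu_n\otimes\mu_n\}$, and any narrow cluster point of the products must coincide with $\mu\otimes\mu$ by the tensor-product case. Applying this to the bounded continuous test function $K_j(|x-y|)$ yields, for every fixed $j$,
\begin{equation*}
\int_{\mathbb{R}^N}\!\!\int_{\mathbb{R}^N}K_j(|x-y|)\,d\mu_n(x)\,d\mu_n(y)\;\longrightarrow\;\int_{\mathbb{R}^N}\!\!\int_{\mathbb{R}^N}K_j(|x-y|)\,d\mu(x)\,d\mu(y).
\end{equation*}

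Since $K_j\le K$, the left-hand side above is bounded by $2E[\mu_n]$, so passing to the limit gives $\int\!\!\int K_j\,d\mu\,d\mu\le 2\liminf_{n\to\infty}E[\mu_n]$. Monotone convergence as $j\to\infty$, valid because $0\le K_j\nearrow K$, upgrades this to $2E[\mu]\le 2\liminf_{n\to\infty}E[\mu_n]$, which is the claimed weak lower-semicontinuity. The main obstacle is the product-convergence step: while intuitive, it is not purely algebraic on a non-compact space and must be obtained via tightness, so Prokhorov's theorem is used crucially. Once that is in hand, no singular-integral subtleties remain, because each approximant $K_j$ is globally bounded and continuous.
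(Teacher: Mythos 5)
Your proof is correct and is essentially the standard argument: truncated inf-convolutions $K_j\nearrow K$ of the lower-semicontinuous kernel, the narrow convergence $\mu_n\otimes\mu_n\rightharpoonup\mu\otimes\mu$ obtained through tightness and Prokhorov's theorem, and monotone convergence to pass from $K_j$ back to $K$. The paper itself gives no proof of this lemma---it imports it from \cite[Lemma 2.2]{SST}---and the proof there follows the same scheme (a pointwise increasing approximation of $K$ from below by bounded continuous functions combined with weak convergence of the product measures), so your route coincides with the approach of the cited source.
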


Now we are in a position to prove Theorem \ref{th3}.

\vskip 2mm
\emph{Proof of Theorem \ref{th3}.}
Let $\{\mu_n\}\subset \mathcal{P}(\mathbb{R}^N)$ be a minimizing sequence of (\ref{eq 6}).
According to Lemma \ref{lemma 4} and the Prokhorov's theorem  \cite[Theorem 4.1]{B},
there exists a subsequence $\{\mu_{n_k}\} \subset\mathcal{P}(\mathbb{R}^N)$
and a measure $\mu_0\in \mathcal{P}(\mathbb{R}^N)$ satisfy
\begin{eqnarray*}
\mu_{n_k}\rightharpoonup \mu_0\ \ {\rm in} \ \  \mathcal{P}(\mathbb{R}^N).
\end {eqnarray*}
Clearly we get
\begin{eqnarray}\label{eq agh1}
E[\mu_0]\geq\inf \{E[\mu]:\rho\in \mathcal{D}_{M,m}\}=\lim_{n\rightarrow\infty}E[\mu_n].
\end{eqnarray}
Since $F(x)$ is continuous and bounded in $B(0,L)$, by  \cite[Definition 1.3.3]{B}
we have $$\int_{B(0,L)}F(x)d\mu_{n_k}(x)\rightarrow\int_{B(0,L)}F(x)d\mu_0(x) \ {\rm as} \ k\rightarrow\infty.$$
Similarly to (\ref{eq 9}), it follows that
\begin{eqnarray}\label{eq 11}
\int_{\mathbb{R}^N}F(x)d\mu_0(x)\leq\liminf_{k\rightarrow\infty}\int_{\mathbb{R}^N}F(x)d\mu_{n_k}(x)+\varepsilon.
\end {eqnarray}
We can deduce from  Lemma \ref{lemma 5} that
\begin{eqnarray}\label{eq 12}
\int_{\mathbb{R}^N} \int_{\mathbb{R}^N}K(x-y)d\mu_{n_k}(x)d\mu_{n_k}(y)\rightarrow\int_{\mathbb{R}^N}
\int_{\mathbb{R}^N}K(x-y)d\mu_0(x)d\mu_0(y),
\ {\rm \ as} \ k\rightarrow\infty.
\end {eqnarray}
In view of (\ref{eq 11}) and  (\ref{eq 12}), we see
\begin{eqnarray*}
&&\frac{1}{2}\int_{\mathbb{R}^N}\int_{\mathbb{R}^N}K(x-y)d\mu_0(x)d\mu_0(y)+\int_{\mathbb{R}^N}F(x)d\mu_0(x)
\\&&\leq\liminf_{k\rightarrow\infty}\left(\frac{1}{2}\int_{\mathbb{R}^N} \int_{\mathbb{R}^N}K(x-y)d\mu_{n_k}(x)d\mu_{n_k}(y)
+\int_{\mathbb{R}^N}F(x)d\mu_{n_k}(x)\right)+\varepsilon.
\end{eqnarray*}
Letting $\varepsilon\rightarrow0$ and using (\ref{eq agh1}) we obtain
$$\inf \{E[\mu]:\mu\in \mathcal{P}(\mathbb{R}^N)\}\leq E[\mu_0]\leq\liminf_{k\rightarrow\infty}E[\mu_{n_k}]=
\inf \{E[\mu]:\mu\in \mathcal{P}(\mathbb{R}^N)\},$$
which means that $\mu_0$ is a minimizer for (\ref{eq 6}) when $0<p<q$.\hfill$\Box$

\section{Proof of Theorem \ref{th4}}
This section is devoted to computing the global minimizer for the functional (\ref{eq 1})
with the endogenous potential satisfying
\begin{eqnarray}\label{123456}
K(x)=\frac{1}{2}|x|^2-\frac{1}{2-N}|x|^{2-N},\ \ N>2
\end{eqnarray}
and the exogenous potential
\begin{eqnarray}\label{1234561}
F(x)=\beta|x|^2,\ \ \ \beta>0.
\end{eqnarray}
We introduce the following lemma,
which determines the condition for $\rho_0\in \mathcal{D}_{M,m}$
to be a minimizer of the functional $E[\rho]$.

\begin{lem}\label{lemma 6}
Let $N\geq1$. Then $\rho_0\in \mathcal{D}_{M,m}$ is a local minimizer of the functional (\ref{eq 1})  if and  only if
\begin{eqnarray}\label{eq 17}
  \left\{\begin{array}{lcl}
     \medskip
       \psi(x)\geq c_0 \ \ {\rm a.e.\  on\  the \ set} \  \{x:\rho_0(x)=0\},\\
     \medskip
     \psi(x)=c_0 \ \ {\rm a.e.\  on\  the \ set} \  \{x:\rho_0(x)>0\},
  \end{array}\right.
\end{eqnarray}
where the function $\psi$ is defined by
\begin{eqnarray}\label{eq 13}
\psi(x)=\int_{\mathbb{R}^N}K(x-y)\rho_0(y)dy+F(x)
\end{eqnarray}
and $c_0:=\frac{\int_{\mathbb{R}^N}\rho_0(x)\psi(x)dx}{m}$ is a constant.
\end{lem}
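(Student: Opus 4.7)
The plan is to view the statement as a KKT-type first-order optimality characterization for the constrained minimization of the quadratic functional $E[\rho]$ over $\mathcal{D}_{M,m}$: the mass constraint $\|\rho\|_{L^1}=m$ produces the Lagrange multiplier $c_0$, and the pointwise lower bound $\rho\ge 0$ produces the complementary-slackness dichotomy in (\ref{eq 17}). The main tool is the first variation of $E[\rho]$ along admissible mass-preserving perturbations of $\rho_0$.

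For the necessity direction, I would assume $\rho_0$ is a local minimizer and, given a density point $x_0$ of $\{\rho_0\ge\delta\}$ (for some $\delta>0$) and a Lebesgue point $x_1$ of $\{\rho_0=0\}$, form the mass-preserving variation
\[
\rho_t(x)=\rho_0(x)+t\Bigl(|B(x_1,r)|^{-1}\chi_{B(x_1,r)}(x)-|B(x_0,r)|^{-1}\chi_{B(x_0,r)}(x)\Bigr),
\]
which lies in $\mathcal{D}_{M,m}$ for $r,t>0$ sufficiently small. Because $E[\rho]$ is quadratic in $\rho$, direct computation gives
\[
\lim_{t\to 0^+}\frac{E[\rho_t]-E[\rho_0]}{t}=\frac{1}{|B(x_1,r)|}\int_{B(x_1,r)}\psi\,dx-\frac{1}{|B(x_0,r)|}\int_{B(x_0,r)}\psi\,dx\ge 0.
\]
Sending $r\to 0$ and invoking Lebesgue differentiation yields $\psi(x_1)\ge\psi(x_0)$ a.e. Running the same argument with both $x_0,x_1$ in $\{\rho_0>0\}$ (where the perturbation is admissible in both orientations) forces equality, so $\psi$ is a.e. constant on $\{\rho_0>0\}$; integrating this constant against $\rho_0$ identifies it with $c_0=m^{-1}\int\rho_0\psi\,dx$, proving (\ref{eq 17}).

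For the sufficiency direction, I would assume (\ref{eq 17}) and, for any $\rho\in\mathcal{D}_{M,m}$, expand
\[
E[\rho]-E[\rho_0]=\int_{\mathbb{R}^N}\psi(\rho-\rho_0)\,dx+\tfrac{1}{2}\int_{\mathbb{R}^N}\!\!\int_{\mathbb{R}^N}K(x-y)(\rho-\rho_0)(x)(\rho-\rho_0)(y)\,dx\,dy.
\]
Condition (\ref{eq 17}) forces $\psi\ge c_0$ everywhere with equality on the support of $\rho_0$, so the linear term is $\int\psi(\rho-\rho_0)\,dx=\int\psi\rho\,dx-c_0 m\ge c_0\int\rho\,dx-c_0 m=0$. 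The remaining quadratic term is $O(\|\rho-\rho_0\|^2)$ and, for $\rho$ close to $\rho_0$, is dominated by the strict positivity of the linear term in directions where $\rho-\rho_0$ is not supported on $\{\psi=c_0\}$. The main obstacle is handling the ``neutral'' directions in which $\rho-\rho_0$ is supported on $\{\psi=c_0\}$ and the linear term vanishes: since $K$ decomposes into a Riesz-type positive-definite piece and an indefinite $|x|^q$ piece, one must exploit the specific structure of $K$ (or equivalently read ``local minimizer'' as ``first-order critical point'') to close this gap. The necessity direction, by contrast, is a routine application of mass-transfer perturbations together with Lebesgue differentiation.
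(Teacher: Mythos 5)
Your necessity argument is sound in substance but takes a different route from the paper's. The paper perturbs globally rather than locally: for $\zeta\geq 0$ with $\int_{\mathbb{R}^N}\zeta\,dx\leq \frac{m}{2}$ it sets $\rho_\varepsilon=\rho_0+\varepsilon\bigl(\zeta-\frac{\int\zeta\,dx}{m}\rho_0\bigr)$, preserving mass by uniformly rescaling $\rho_0$ instead of removing a localized bump; the one-sided derivative at $\varepsilon=0^+$ is $\int(\psi-c_0)\zeta\,dx\geq 0$ for every such $\zeta$, which yields $\psi\geq c_0$ a.e.\ on all of $\mathbb{R}^N$ at once, and equality on $\{\rho_0>0\}$ then falls out of the averaging identity $c_0=m^{-1}\int\psi\rho_0\,dx$ (a positive-measure set with $\psi>c_0$ in the support would give $c_0>c_0$). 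This sidesteps your Lebesgue-differentiation step and, more importantly, the admissibility wrinkles in your two-ball transfer: a density point $x_0$ of $\{\rho_0\geq\delta\}$ does not give $\rho_0\geq\delta$ on all of $B(x_0,r)$, so your $\rho_t$ as written can be negative on the exceptional set (you must restrict the negative bump to $B(x_0,r)\cap\{\rho_0\geq\delta\}$), and similarly the constraint $\rho_t\leq M$ requires restricting the positive bump to $B(x_1,r)\cap\{\rho_0=0\}$. These are repairable (and, in fairness, the paper's own $\rho_\varepsilon$ can also violate $\|\rho_\varepsilon\|_{L^\infty}\leq M$ when $\|\rho_0\|_{L^\infty}=M$, an issue it does not discuss); your identification of the constant with $c_0$ is the same as the paper's.

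The genuine gap in your proposal is the sufficiency direction, and you have located it precisely: since $E$ is exactly quadratic, $E[\rho]-E[\rho_0]=\int\psi(\rho-\rho_0)\,dx+\frac12\iint K(x-y)(\rho-\rho_0)(x)(\rho-\rho_0)(y)\,dxdy$, condition (\ref{eq 17}) only signs the linear term, and for perturbations supported in $\{\psi=c_0\}$ everything hinges on the quadratic form, which for kernels of type (\ref{eq d1}) is not positive semidefinite on mean-zero perturbations (with $q=2$ the attractive part contributes a negative multiple of $\bigl|\int x\,(\rho-\rho_0)\,dx\bigr|^2$, competing against the positive-definite Riesz part). You flag this and stop. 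You should know, however, that the paper does not close this gap either: its entire sufficiency proof is the remark that (\ref{eq 12345}) ``can be derived from (\ref{eq 17}) and (\ref{eq 1234})'', i.e.\ it uses only the nonnegativity of the first-order term along its perturbation family and silently ignores the second-order term --- effectively the ``critical point'' reading you propose as a fallback. In the application (Theorem \ref{th4}) the paper does not lean on this implication anyway: global minimality there is argued through a separate convexity step for the specific kernel $\frac12|x|^2+\frac{1}{N-2}|x|^{2-N}$. So your proposal establishes the ``only if'' half rigorously modulo the bump fixes above, while its admitted failure on the ``if'' half coincides with a gap in the paper's own argument rather than with an idea the paper supplies and you missed.
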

\begin{proof}
First we prove the necessity.
The main idea of the proof is similar to the strategy introduced in \cite[Lemma 3.8]{CFT},
where the authors considered the functional with $F\equiv0$.
Since the exogenous potential $F$ is involved
in the computations, we prefer to give details for the convenience of the reader.

Let $\rho_0\in \mathcal{D}_{M,m}$ be a minimizer of the functional (\ref{eq 1}) and
$Z=\{\zeta\in L^1(\mathbb{R}^N)\cap L^\infty(\mathbb{R}^N):\zeta\geq 0,\
\int_{\mathbb{R}^N}\zeta(x)dx\leq\frac{m}{2}\}$.
For $0\leq\varepsilon\leq1$ and $\zeta\in Z$, we denote
$$\rho_\varepsilon(x)=\rho_0(x)+\varepsilon\left(\zeta(x)-\frac{\int_{\mathbb{R}^N}
\zeta(x)dx}{m}\rho_0(x)\right).$$
Noting that $\int_{\mathbb{R}^N}\rho_\varepsilon(x)dx=m$ and
for $0\leq\varepsilon\leq1$, $\zeta\geq 0$ and
$\int_{\mathbb{R}^N}\zeta(x)dx\leq\frac{m}{2}$, we have
\begin{eqnarray*}
\rho_\varepsilon(x)&=&\rho_0(x)\left(1-\varepsilon\frac{\int_{\mathbb{R}^N}
\zeta(x)dx}{m}\right)+\varepsilon\zeta(x)
\\&\geq&\frac{1}{2}\rho_0(x)+\varepsilon\zeta(x)\geq0.
\end{eqnarray*}
Define the function
\begin{eqnarray}\label{eq 114}
e[\varepsilon]:=E[\rho_\varepsilon]=E\left[\rho_{0}(x)+\varepsilon
\left(\zeta(x)-\frac{\int_{\mathbb{R}^{N}} \zeta(x) d x}{m} \rho_{0}(x)\right)\right]
\end{eqnarray}
on the interval $[0, 1]$. First note that $0$ is a boundary point, since for $ \varepsilon< 0$ and
$x\in \{x:\rho_0(x)=0\}\cap \{x:\zeta(x)>0\}$, we have $\rho_\varepsilon(x)=\varepsilon\zeta(x)<0$,
which means $\rho_\varepsilon(x)$ is not a member of the admissible class for $ \varepsilon< 0$.

Since
\begin{eqnarray}\label{eq 12345}
e[\varepsilon]\geq E[\rho_{0}] \ \ \ {\rm for} \ \  0\leq\varepsilon\leq1,
\end{eqnarray}
due to the local minimality of $\rho_0\in \mathcal{D}_{M,m}$, we deduce that
\begin{equation}\label{eq 14}
0\leq\lim_{\varepsilon\rightarrow0^+}\frac{e[\varepsilon]-e[0]}{\varepsilon-0}=
\left.\frac{de[\varepsilon]}{d\varepsilon}\right|_{\varepsilon=0^+}
=\left.\frac{dE[\rho_\varepsilon]}{d\varepsilon}\right|_{\varepsilon=0^+}.
\end{equation}
We compute for all $\zeta\in Z$
\begin{eqnarray}
\left.\frac{dE[\rho_\varepsilon]}{d\varepsilon}\right|_{\varepsilon=0^+}&=&\int_{\mathbb{R}^N}\int_{\mathbb{R}^N}K(x-y)\zeta(x)\rho_0(y)dxdy
+\int_{\mathbb{R}^N}F(x)\zeta(x)dx\nonumber
\\[1.5mm]&&-\frac{\int_{\mathbb{R}^N}\zeta(x)dx}{m}\left(\int_{\mathbb{R}^N} \int_{\mathbb{R}^N}K(x-y)\rho_0(x)\rho_0(y)dxdy+\int_{\mathbb{R}^N}F(x)\rho_0(x)dx\right)\nonumber
\\&=&\int_{\mathbb{R}^N}\left(\int_{\mathbb{R}^N}K(x-y)\rho_0(y)dy+F(x)\right)\zeta(x)dx\nonumber
\\[1.5mm]&&-\frac{\int_{\mathbb{R}^N}\left(\int_{\mathbb{R}^N}K(x-y)\rho_0(y)dy+F(x)\right)\rho_0(x)dx}{m}\int_{\mathbb{R}^N}\zeta(x)dx\nonumber
\\&=&\int_{\mathbb{R}^N}\psi(x)\zeta(x)dx-c_0\int_{\mathbb{R}^N}\zeta(x)dx\nonumber
\\&=&\int_{\mathbb{R}^N}(\psi(x)-c_0)\zeta(x)dx\geq0,\label{eq 1234}
\end{eqnarray}
where $\psi$ is defined in (\ref{eq 13}) and
$$c_0:=\frac{\int_{\mathbb{R}^N}\psi(x)\rho_0(x)dx}{m}.$$
From (\ref{eq 1234}), we can obtain
\begin{equation}\label{eq c1}
\psi(x)-c_0\geq0\ \  {\rm a.e. \ on\  }\  \mathbb{R}^N.
\end{equation}
Indeed, suppose that there exists a nonzero measure set $A\subset\mathbb{R}^N$ such that
$\psi(x)-c_0<0$ in $A$.
We choose
\begin{equation*}
\zeta(x) =\left\{ \arraycolsep=1pt
\begin{array}{lll}
 c_1, \quad
 \quad {\rm if}\   x\in A,\\[2mm]
 \phantom{    }
 0,\quad
 \quad \ {\rm if}\ x\in \mathbb{R}^N\backslash A
\end{array}
\right.\qquad
\end{equation*}
where $c_1>0$ and $c_1\cdot|A|\leq\frac{m}{2}$.
Clearly $\zeta\in Z$ and
$$\int_{\mathbb{R}^N}(\psi(x)-c_0)\zeta(x)dx=\int_{A}(\psi(x)-c_0)\zeta(x)dx+\int_{\mathbb{R}^N\backslash A}
(\psi(x)-c_0)\zeta(x)dx=c_1\int_{A}(\psi(x)-c_0)dx<0,$$
which contradicts with (\ref{eq 14}).
Furthermore, if there exists a nonzero measure set  $B\subset\{x:\rho_0(x)>0\}$ such that $\psi> c_0$ in $B$, then
$$c_0=\frac{\int_{\mathbb{R}^N}\psi(x)\rho_0(x)dx}{m}>\frac{c_0\int_{\mathbb{R}^N}\rho_0(x)dx}{m}= c_0,$$
which implies that
\begin{equation}\label{eq c2}
\psi(x)=c_0 \ \  {\rm a.e. \ on\  the \ set}\  \{x:\rho_0(x)>0\}.
\end{equation}
In view of (\ref{eq c1}) and (\ref{eq c2}),
we conclude that (\ref{eq 17}) is satisfied.

For the sufficiency, we only note that (\ref{eq 12345}) can be derived from (\ref{eq 17}) and (\ref{eq 1234}),
i.e.  $\rho_0\in \mathcal{D}_{M,m}$ is a local minimizer of the functional (\ref{eq 1}).
\end{proof}

\vskip 2mm

Now we  are ready to give the proof of Theorem \ref{th4}.

\vskip 2mm
\emph{Proof of Theorem \ref{th4}.}
Theorem \ref{th1} shows that the existence of a minimizer for (\ref{eq 4})
in $\mathcal{D}_{M,m}$.
Hence we only need to prove that $\rho_0(x)=\frac{m+2\beta}{\omega_N}\chi_{B(0,r_0)}(x)$
is the global minimizer of (\ref{eq 4}),
where $r_0=(\frac{m}{m+2\beta})^{1/N}$.
We separate the proof into three steps.

\medskip

\emph{Step 1.}  To prove $\rho_0\in \mathcal{D}_{M,m}$.
Noting that $\rho_0\geq0$ and $\|\rho_0\|_{L^\infty(\mathbb{R}^N)}=\frac{m+2\beta}{\omega_N}\leq M$,
where $\omega_N$ denotes volume of the unit ball in $\mathbb{R}^N$.
A simple calculation shows that
$$\int_{\mathbb{R}^N}\rho_0(x)dx=\int_{\mathbb{R}^N}\frac{m+2\beta}{\omega_N}
\chi_{B\left(0,(\frac{m}{m+2\beta})^{\frac{1}{N}}\right)}(x)dx
=\frac{m+2\beta}{\omega_N}{\omega_N}r_0^N=m,$$
$$ \int_{\mathbb{R}^N}\beta|x|^2\rho_0(x)dx<\beta r_0^2m$$
and
$$\int_{\mathbb{R}^N}(1+|x|)^2\rho_0(x)dx<(1+r_0)^2m,$$
which implies that $\rho_0\in \mathcal{D}_{M,m}$.

\vskip 2mm

\emph{Step 2.} To prove that $\rho_0(x)=\frac{m+2\beta}{\omega_N}
\chi_{B(0,r_0)}(x)$ is a local minimizer of the following functional
\begin{equation}\label{eq 15}
E[\rho]=\frac{1}{2}\int_{\mathbb{R}^N} \int_{\mathbb{R}^N}\left(\frac{1}{2}|x-y|^2
-\frac{1}{2-N}|x-y|^{2-N}\right)\rho(x)\rho(y)dxdy
+\int_{\mathbb{R}^N}\beta|x|^2\rho(x)dx.
\end{equation}
We need to prove that $\rho_0$ satisfies (\ref{eq 17}).
Let
\begin{equation}\label{eq c5}
\phi(x)=\int_{B\left(0,r_0\right)}\frac{1}{N(N-2)\omega_N|x-y|^{N-2}}dy,
\end{equation}
then $\phi$ is the solution of  the Poisson problem
\begin{equation*}
-\Delta\phi(x) =\left\{ \arraycolsep=1pt
\begin{array}{lll}
 \displaystyle  1,  \quad
 \quad {\rm if}\  |x|\leq r_0,\\[2mm]
 \phantom{    }
 \displaystyle  0,\quad
 \quad {\rm if}\  |x|>r_0.
\end{array}
\right.\qquad
\end{equation*}
Furthermore, $\phi(x)$ is radial and
$$-\Delta\phi(x)=-\frac{\partial^2\phi(r)}{\partial r^2}-\frac{N-1}{r}\frac{\partial\phi(r)}{\partial r}
=-\frac{1}{r^{N-1}}\frac{\partial}{\partial r}\left(r^{N-1}\frac{\partial\phi(r)}{\partial r}\right)$$
for $r=|x|$.
We integrate once to get
\begin{equation*}
\frac{\partial\phi(r)}{\partial r} =\left\{ \arraycolsep=1pt
\begin{array}{lll}
 \displaystyle  -\frac{r}{N},  \quad
 \quad  \phantom{zzzzz  zzzzz zzz}\  {\rm if}\  r\leq r_0,\\[2mm]
 \phantom{    }
 \displaystyle -\frac{m}{N(m+2\beta)r^{N-1}},\quad
 \quad \ {\rm if}\  r>r_0.
\end{array}
\right.\qquad
\end{equation*}
Using the fact that $\phi\in C^1$ and integrating once more, we achieve
\begin{equation}\label{eq c3}
\phi(r) =\left\{ \arraycolsep=1pt
\begin{array}{lll}
 \displaystyle  -\frac{r^2}{2N}+\frac{1}{2(N-2)}(\frac{m}{m+2\beta})^{\frac{2}{N}},  \quad
 \quad {\rm if}\  r\leq r_0,\\[2mm]
 \phantom{    }
 \displaystyle \frac{m}{(m+2\beta)N(N-2)r^{N-2}},\quad
 \qquad \ \ \ {\rm if}\  r>r_0.
\end{array}
\right.\qquad
\end{equation}

A simple computation yields,
\begin{eqnarray}
\psi(x)&=&\int_{\mathbb{R}^N}\left(\frac{1}{2}|x-y|^2-\frac{1}{2-N}|x-y|^{2-N}\right)\frac{m+2\beta}{\omega_N}
\chi_{B\left(0,r_0\right)}(y)dy+\beta|x|^2\nonumber
\\&=&\frac{m+2\beta}{\omega_N}\int_{B(0,r_0)}\frac{1}{2}|x|^2dy
+\frac{m+2\beta}{\omega_N}\int_{B(0,r_0)}\frac{1}{2}|y|^2dy\nonumber
\\&&+(m+2\beta)N\int_{B(0,r_0)}\frac{1}{N(N-2)
\omega_N|x-y|^{N-2}}dy+\beta|x|^2,\nonumber
\\&=&\frac{m+2\beta}{2\omega_N}\omega_N\frac{m}{m+2\beta}|x|^2+\beta|x|^2
+\frac{N(m+2\beta)}{2(N+2)}\left(\frac{m}{m+2\beta}\right)^{\frac{N+2}{N}}\nonumber
\\&&+(m+2\beta)N\int_{B(0,r_0)}\frac{1}{N(N-2)
\omega_N|x-y|^{N-2}}dy.\label{eq c7}
\end{eqnarray}
Putting (\ref{eq c5}) and (\ref{eq c3}) into
(\ref{eq c7}), we get
\begin{eqnarray*}
\psi(x)&=&\left\{ \arraycolsep=1pt
\begin{array}{lll}
 \displaystyle  \frac{N(m+2\beta)}{2(N+2)}\left(\frac{m}{m+2\beta}\right)^{\frac{N+2}{N}}
 +\frac{(m+2\beta)N}{2(N-2)}\left(\frac{m}{m+2\beta}\right)^{\frac{2}{N}}  \quad
 \ \  {\rm if}\  |x|\leq r_0,\\[2mm]
 \phantom{    }
\displaystyle \frac{m|x|^2}{2}+\frac{m}{N-2}|x|^{2-N}+\beta|x|^2+
\frac{N(m+2\beta)}{2(N+2)}\left(\frac{m}{m+2\beta}\right)^{\frac{N+2}{N}}
{\rm if}\  |x|>r_0.
\end{array}
\right.
\end{eqnarray*}
Since for $|x|>r_0$, $\psi(x)$ is an increasing function of $|x|$,
we obtain
$$\psi(x)\geq\psi(r_0)=\frac{N(m+2\beta)}{2(N+2)}
\left(\frac{m}{m+2\beta}\right)^{\frac{N+2}{N}}
 +\frac{(m+2\beta)N}{2(N-2)}\left(\frac{m}{m+2\beta}\right)^{\frac{2}{N}}=c_0,$$
for a constant $c_0$. It follows that
\begin{eqnarray*}
  \left\{\begin{array}{lcl}
     \medskip
       \psi(x)\geq c_0 ,\ \ \ \ \  {\rm if}\ |x|>\left(\frac{m}{m+2\beta}\right)^{\frac{1}{N}},\\
     \medskip
     \psi(x)=c_0, \ \ \ \ \  {\rm if}\ |x|\leq\left(\frac{m}{m+2\beta}\right)^{\frac{1}{N}}.
  \end{array}\right.
\end{eqnarray*}
Thus $\rho_0(x)=\frac{m+2\beta}{\omega_N}
\chi_{B(0,r_0)}(x)$ satisfies (\ref{eq 17}),
it can infer from Lemma \ref{lemma 6} that $\rho_0$ is a local minimizer of the functional (\ref{eq 15}).

\emph{Step 3.} To prove that
the functional (\ref{eq 15})
is strictly convex.
Let $x_0\in\mathbb{R}^N$ be the center of mass of density, that is
$\int_{\mathbb{R}^N}x\rho(x-x_0)dx=0$, so we can simplify the functional (\ref{eq 15})
\begin{eqnarray}
E[\rho]&=&\frac{1}{4}\int_{\mathbb{R}^N}\int_{\mathbb{R}^N}|x-y|^2 \rho(x)\rho(y)dxdy+\int_{\mathbb{R}^N}\beta|x|^2 \rho(x)dx-\frac{1}{2(2-N)}
\int_{\mathbb{R}^N}\int_{\mathbb{R}^N}\frac{\rho(x)\rho(y)}{|x-y|^{N-2}}dxdy\nonumber
\\&=&\frac{1}{4}\int_{\mathbb{R}^N}\int_{\mathbb{R}^N}|x-x_0-(y-x_0)|^2 \rho(x-x_0)\rho(y-x_0)dxdy+\int_{\mathbb{R}^N}\beta|x|^2 \rho(x)dx\nonumber
\\&&-\frac{1}{2(2-N)}\int_{\mathbb{R}^N}\int_{\mathbb{R}^N}\frac{\rho(x)\rho(y)}{|x-y|^{N-2}}dxdy\nonumber
\\&=&\frac{m}{2}\int_{\mathbb{R}^N}|x|^2\rho(x-x_0)dx+\int_{\mathbb{R}^N}\beta|x|^2 \rho(x)dx
+\frac{1}{2}N\omega_N\|\rho\|^2_{H^{-1}(\mathbb{R}^N)}\label{eq c4},
\end{eqnarray}
with
$$\|\rho\|^2_{H^{-1}(\mathbb{R}^N)}=
\int_{\mathbb{R}^N}\int_{\mathbb{R}^N}\frac{\rho(x)\rho(y)}{N(N-2)\omega_N|x-y|^{N-2}}dxdy.$$
Such $H^{-1}$-norm is given for instance in the proof of  \cite[Theorem 2.4]{CFT},
which is strictly convex.
Then for all $\rho_1$ and $\rho_2$ such that $\rho_1\neq\rho_2$ for $t\in (0,1)$, it follows that
\begin{eqnarray}\label{eq c8}
\|t\rho_1+(1-t)\rho_2\|^2_{H^{-1}(\mathbb{R}^N)}<t\|\rho_1\|^2_{H^{-1}(\mathbb{R}^N)}+(1-t)\|\rho_2\|^2_{H^{-1}(\mathbb{R}^N)}.
\end {eqnarray}
We also have
\begin{eqnarray} \label{eq c9}
\int_{\mathbb{R}^N}\beta|x|^2\left(t\rho_1(x)+(1-t)\rho_2(x)\right)dx
=t\int_{\mathbb{R}^N}\beta|x|^2\rho_1(x)dx+(1-t)\int_{\mathbb{R}^N}\beta|x|^2\rho_2(x)dx,
\end {eqnarray}
and
\begin{eqnarray}
&&\int_{\mathbb{R}^N}|x|^2\left(t\rho_1(x-x_0)+(1-t)\rho_2(x-x_0)\right)dx\nonumber
\\&&=t\int_{\mathbb{R}^N}|x|^2\rho_1(x-x_0)dx+(1-t)\int_{\mathbb{R}^N}|x|^2\rho_2(x-x_0)dx.\label{eq c10}
\end {eqnarray}
Substituting (\ref{eq c8})-(\ref{eq c10}) into (\ref{eq c4}), we can deduce that
$$E[t\rho_1+(1-t)\rho_2]<tE[\rho_1]+(1-t)E[\rho_2],$$
so $E[\rho]$ is strictly convex functional.

In conjunction with \emph{Steps 1-3}, we have
$\rho_0(x)=\frac{m+2\beta}{\omega_N}
\chi_{B(0,r_0)}(x)$ is the global minimizer of
the functional (\ref{eq 15}), which ends the proof.
\hfill$\Box$

\bigskip

\section*{\uppercase {Acknowledgments}}
The authors are supported in part by the National Natural Science Foundation of China (No. 11671079, No. 11701290,
No. 11601127 and No. 11171063), and the Natural Science Foundation of Jiangsu Province (No. BK20170896).

\bigskip


\begin{thebibliography}{10}

\bibitem{AGS}
{\sc L.~Ambrosio, N.~Gigli, and G.~Savar\'{e}}, {\em Gradient flows in metric
  spaces and in the space of probability measures}, Lectures in Mathematics ETH
  Z\"{u}rich, Birkh\"{a}user Verlag, Basel, second~ed., 2008.

\bibitem{BCLR}
{\sc D.~Balagu\'{e}, J.~A. Carrillo, T.~Laurent, and G.~Raoul}, {\em
  Dimensionality of local minimizers of the interaction energy}, Arch. Ration.
  Mech. Anal., 209 (2013), pp.~1055--1088.

\bibitem{BCLRn}
\leavevmode\vrule height 2pt depth -1.6pt width 23pt, {\em Nonlocal
  interactions by repulsive-attractive potentials: radial ins/stability}, Phys.
  D, 260 (2013), pp.~5--25.

\bibitem{BCCP}
{\sc D.~Benedetto, E.~Caglioti, J.~A. Carrillo, and M.~Pulvirenti}, {\em A
  non-{M}axwellian steady distribution for one-dimensional granular media}, J.
  Statist. Phys., 91 (1998), pp.~979--990.

\bibitem{BCP}
{\sc D.~Benedetto, E.~Caglioti, and M.~Pulvirenti}, {\em A kinetic equation for
  granular media}, RAIRO Mod\'{e}l. Math. Anal. Num\'{e}r., 31 (1997),
  pp.~615--641.

\bibitem{BT}
{\sc A.~J. Bernoff and C.~M. Topaz}, {\em Nonlocal aggregation models: a primer
  of swarm equilibria}, SIAM Rev., 55 (2013), pp.~709--747.

\bibitem{BL}
{\sc A.~L. Bertozzi and T.~Laurent}, {\em Finite-time blow-up of solutions of
  an aggregation equation in {$\bold R^n$}}, Comm. Math. Phys., 274 (2007),
  pp.~717--735.

\bibitem{B}
{\sc P.~Billingsley}, {\em Weak convergence of measures: {A}pplications in
  probability}, Society for Industrial and Applied Mathematics, Philadelphia,
  PA., 1971.

\bibitem{BV}
{\sc M.~Bodnar and J.~J.~L. Velazquez}, {\em An integro-differential equation
  arising as a limit of individual cell-based models}, J. Differential
  Equations, 222 (2006), pp.~341--380.

\bibitem{CCP}
{\sc J.~A. Ca\~{n}izo, J.~A. Carrillo, and F.~S. Patacchini}, {\em Existence of
  compactly supported global minimisers for the interaction energy}, Arch.
  Ration. Mech. Anal., 217 (2015), pp.~1197--1217.

\bibitem{CCH}
{\sc J.~A. Carrillo, M.~Chipot, and Y.~Huang}, {\em On global minimizers of
  repulsive-attractive power-law interaction energies.}, Philos Trans A Math
  Phys Eng Sci, 372 (2014), pp.~S117--S118.

\bibitem{CDM}
{\sc J.~A. Carrillo, M.~G. Delgadino, and A.~Mellet}, {\em Regularity of local
  minimizers of the interaction energy via obstacle problems}, Comm. Math.
  Phys., 343 (2016), pp.~747--781.

\bibitem{CDFLS}
{\sc J.~A. Carrillo, M.~DiFrancesco, A.~Figalli, T.~Laurent, and
  D.~Slep\v{c}ev}, {\em Global-in-time weak measure solutions and finite-time
  aggregation for nonlocal interaction equations}, Duke Math. J., 156 (2011),
  pp.~229--271.

\bibitem{CFP}
{\sc J.~A. Carrillo, A.~Figalli, and F.~S. Patacchini}, {\em Geometry of
  minimizers for the interaction energy with mildly repulsive potentials}, Ann.
  Inst. H. Poincar\'{e} Anal. Non Lin\'{e}aire, 34 (2017), pp.~1299--1308.

\bibitem{CHM}
{\sc J.~A. Carrillo, Y.~Huang, and S.~Martin}, {\em Nonlinear stability of
  flock solutions in second-order swarming models}, Nonlinear Anal. Real World
  Appl., 17 (2014), pp.~332--343.

\bibitem{CMVk}
{\sc J.~A. Carrillo, R.~J. McCann, and C.~Villani}, {\em Kinetic equilibration
  rates for granular media and related equations: entropy dissipation and mass
  transportation estimates}, Rev. Mat. Iberoamericana, 19 (2003),
  pp.~971--1018.

\bibitem{CMV}
\leavevmode\vrule height 2pt depth -1.6pt width 23pt, {\em Contractions in the
  2-{W}asserstein length space and thermalization of granular media}, Arch.
  Ration. Mech. Anal., 179 (2006), pp.~217--263.

\bibitem{CFT}
{\sc R.~Choksi, R.~C. Fetecau, and I.~Topaloglu}, {\em On minimizers of
  interaction functionals with competing attractive and repulsive potentials},
  Ann. Inst. H. Poincar\'{e} Anal. Non Lin\'{e}aire, 32 (2015), pp.~1283--1305.

\bibitem{CHDB}
{\sc Y.~L. Chuang, Y.~R. Huang, M.~R. D'Orsogna, and A.~L. Bertozzi}, {\em
  Multi-vehicle flocking: Scalability of cooperative control algorithms using
  pairwise potentials}, in IEEE International Conference on Robotics and
  Automation, 2006, pp.~2292--2299.

\bibitem{CT}
{\sc K.~Craig and I.~Topaloglu}, {\em Convergence of regularized nonlocal
  interaction energies}, SIAM J. Math. Anal., 48 (2016), pp.~34--60.

\bibitem{DZ}
{\sc Q.~Du and P.~Zhang}, {\em Existence of weak solutions to some vortex
  density models}, SIAM J. Math. Anal., 34 (2003), pp.~1279--1299.

\bibitem{FR}
{\sc K.~Fellner and G.~Raoul}, {\em Stable stationary states of non-local
  interaction equations}, Math. Models Methods Appl. Sci., 20 (2010),
  pp.~2267--2291.

\bibitem{FRs}
\leavevmode\vrule height 2pt depth -1.6pt width 23pt, {\em Stability of
  stationary states of non-local equations with singular interaction
  potentials}, Math. Comput. Modelling, 53 (2011), pp.~1436--1450.

\bibitem{FH}
{\sc R.~C. Fetecau and Y.~Huang}, {\em Equilibria of biological aggregations
  with nonlocal repulsive-attractive interactions}, Phys. D, 260 (2013),
  pp.~49--64.

\bibitem{FHK}
{\sc R.~C. Fetecau, Y.~Huang, and T.~Kolokolnikov}, {\em Swarm dynamics and
  equilibria for a nonlocal aggregation model}, Nonlinearity, 24 (2011),
  pp.~2681--2716.

\bibitem{HPP}
{\sc D.~D. Holm and V.~Putkaradze}, {\em Aggregation of finite-size particles
  with variable mobility}, Physical Review Letters, 95 (2005), p.~226106.

\bibitem{HP}
{\sc D.~D. Holm and V.~Putkaradze}, {\em Formation of clumps and patches in
  self-aggregation of finite-size particles}, Phys. D, 220 (2006),
  pp.~183--196.

\bibitem{LTB}
{\sc A.~J. Leverentz, C.~M. Topaz, and A.~J. Bernoff}, {\em Asymptotic dynamics
  of attractive-repulsive swarms}, SIAM J. Appl. Dyn. Syst., 8 (2009),
  pp.~880--908.

\bibitem{Lieb}
{\sc E.~H. Lieb}, {\em Sharp constants in the {H}ardy-{L}ittlewood-{S}obolev
  and related inequalities}, Ann. of Math. (2), 118 (1983), pp.~349--374.

\bibitem{Lions}
{\sc P.-L. Lions}, {\em The concentration-compactness principle in the calculus
  of variations. {T}he locally compact case. {I}}, Ann. Inst. H. Poincar\'{e}
  Anal. Non Lin\'{e}aire, 1 (1984), pp.~109--145.

\bibitem{MZ}
{\sc N.~Masmoudi and P.~Zhang}, {\em Global solutions to vortex density
  equations arising from sup-conductivity}, Ann. Inst. H. Poincar\'{e} Anal.
  Non Lin\'{e}aire, 22 (2005), pp.~441--458.

\bibitem{ME}
{\sc A.~Mogilner and L.~Edelstein-Keshet}, {\em A non-local model for a swarm},
  J. Math. Biol., 38 (1999), pp.~534--570.

\bibitem{PEG}
{\sc L.~Perea, P.~Elosegui, and G.~G¨®mez}, {\em Extension of the cucker-smale
  control law to space flight formations}, Journal of Guidance Control and
  Dynamics, 32 (2009), pp.~527--537.

\bibitem{R}
{\sc D.~Ruelle}, {\em Statistical mechanics: {R}igorous results}, W. A.
  Benjamin, Inc., New York-Amsterdam, 1969.

\bibitem{SST}
{\sc R.~Simione, D.~Slep\v{c}ev, and I.~Topaloglu}, {\em Existence of ground
  states of nonlocal-interaction energies}, J. Stat. Phys., 159 (2015),
  pp.~972--986.

\bibitem{Struwe}
{\sc M.~Struwe}, {\em Variational methods :: applications to nonlinear partial
  differential equations and Hamiltonian systems}, 2000.

\bibitem{TBL}
{\sc C.~M. Topaz, A.~L. Bertozzi, and M.~A. Lewis}, {\em A nonlocal continuum
  model for biological aggregation}, Bull. Math. Biol., 68 (2006),
  pp.~1601--1623.

\bibitem{T}
{\sc G.~Toscani}, {\em One-dimensional kinetic models of granular flows},
  Mod\'{e}l. Math. Anal. Num\'{e}r., 34 (2000), pp.~1277--1291.

\bibitem{V}
{\sc C.~Villani}, {\em Topics in optimal transportation}, vol.~58 of Graduate
  Studies in Mathematics, American Mathematical Society, Providence, RI, 2003.

\bibitem{W}
{\sc E.~Weinan}, {\em Dynamics of vortices in ginzburg-landau theories with
  applications to superconductivity}, Physica D: Nonlinear Phenomena, 77
  (1994).

\end{thebibliography}
\end{document}